\newcommand{\core}[1]{#1^{\tiny{\textcircled{\tiny\#}}}}
\numberwithin{equation}{section}
\definecolor{astral}{RGB}{46,116,181}
\DeclareMathAlphabet{\mathpzc}{OT1}{pzc}{m}{it}
\DeclareFontFamily{OT1}{pzc}{}
\DeclareFontShape{OT1}{pzc}{m}{it}{<-> s * [0.900] pzcmi7t}{}
\DeclareMathAlphabet{\mathpzc}{OT1}{pzc}{m}{it}
\newlength{\dhatheight}
\DeclareMathAlphabet\mathbfcal{OMS}{cmsy}{b}{n}
\definecolor{darkslategray}{rgb}{0.18, 0.31, 0.31}
\definecolor{warmblack}{rgb}{0.0, 0.26, 0.26}
\def\BState{\State\hskip-\ALG@thistlm}
\newtheorem{theorem}{Theorem}[section]
\newtheorem{lemma}[theorem]{Lemma}
\newtheorem{corollary}[theorem]{Corollary}
\theoremstyle{definition}
\newtheorem{definition}{Definition}[section]
\newtheorem{remark}{Remark}[section]
\newtheorem{example}{Example}[section]
\journal{...}
\newcommand{\R}{{\mathbb R}}
\begin{document}

\begin{frontmatter}

\title{Dual Drazin generalized inverse for dual matrices}

\vspace{-.4cm}
%

\author{ Amit Kumar$^{1a}$ and Vaibhav Shekhar$^{2b}$}
 \address{

                   $^a$Department of Mathematics,\\
                        National Institute of Technology Raipur, India.\\

                        $^b$Department of Mathematics,\\ Indian Institute of Technology Delhi, India\\
                Email$^1$:amitdhull513@gmail.com\\
                Email$^2$:  vaibhavshekhar29@gmail.com}
                       
\vspace{-2cm}

\begin{abstract}
This manuscript proposes a generalized inverse for a dual matrix called dual Drazin generalized inverse (DDGI) which generalizes the notion of the dual group generalized inverse (DGGI). Under certain necessary and sufficient conditions, we establish the existence of the DDGI of a dual matrix of any index. Thereafter, we show that the DDGI is unique (whenever exists).  The DDGI is then used to solve a linear dual system. We also establish reverse-order law and forward-order law for a particular form of the DGGI, dual Moore-Penrose generalized inverse (DMPGI), dual core generalized inverse (DCGI), and DDGI under certain suitable conditions. Finally, the partial-orders based on DCGI and DGGI are proposed.
\end{abstract}

 \begin{keyword}
Generalized inverse; Drazin inverse; Dual matrix.\\
{\bf Mathematics subject classifications: 15A09, 15A57, 15A24.}
\end{keyword}

\end{frontmatter}

\newpage
\section{Introduction}
A representation of the form $\widehat{a}=a+\epsilon b$, where $a$ and $b$ are real numbers, and $\epsilon$ is the dual unit satisfying $\epsilon\neq 0$ and $\epsilon^2= 0$, is called a dual number. In this representation the number associated with $+1$ (i.e., $a$) and the dual unit $\epsilon$ (i.e., $b$) are called real and dual part of $\widehat{a}$, respectively.
The term dual number was first coined by William Clifford in the $18^{\text{th}}$ century, while Kotelnikov in 1895 was first one to show applications of dual numbers in mechanics (see \cite{rg1}). It is well known that the set of dual numbers forms a ring.
An extension of dual numbers are dual vectors and dual matrices.
Algebra of dual matrices has vast applications in many areas of science and engineering that includes kinematic problems \cite{app4}, robotics \cite{angeles}, surface shape analysis and computer graphics \cite{azar} and rigid body motion \cite{oncu}.

In this aspect, the use of generalized inverse to handle a case where we require to solve a dual linear system is shown in many recent articles \cite{udw1, udw2, rg4}. The study of generalized inverses of a dual matrix allow one to compute the dual equation simultaneously rather than  using the traditional method of splitting the system of dual linear equations into the real part and dual part and forming a system of real equations and then computing them individually. However, unlike the case where matrix has real entries, the existence of generalized inverses in case of dual number entries is not guaranteed. This makes it compulsory to investigate the conditions for existence of generalized inverses and to find their explicit expressions. We will now recall the definitions of some of the generalized inverses and their existence conditions available in the literature.
In 2009, Pennestr\'i and Valentini \cite{rg1, rg2} proposed the Moore-Penrose dual generalized inverse (MPDGI) of $\widehat{A}=A+\epsilon B$, denoted as $\widehat{A}^P$ and computed as $\widehat{A}^P=A^{\dagger}-\epsilon A^{\dagger}BA^{\dagger}$. The MPDGI is used in many inverse problems of kinematics and machines \cite{app1, app2, app3, app4}.
Udwadia {\it et al.} \cite{udw1, udw2} proposed the { dual Moore-Penrose generalized inverse} (DMPGI) which is recalled next.
Let $\widehat{A}=A+\epsilon B$ be a dual matrix, the unique matrix
$\widehat{X}$ (if it exists) satisfying
$$\widehat{A}\widehat{X}\widehat{A}=\widehat{A}, \widehat{X}\widehat{A}\widehat{X}=\widehat{X}, (\widehat{A}\widehat{X})^T=\widehat{A}\widehat{X}, \text{ and } (\widehat{X}\widehat{A})^T=\widehat{X}\widehat{A},$$
is called the DMPGI of $\widehat{A}$, and it is represented as $\widehat{A}^{\dagger}$.
DMPGI is applicable to the kinematics synthesis of spatial mechanisms \cite{udw1, udw2}. The authors \cite{udw1} showed that there are uncountably many dual matrices that do not have DMPGI.  For a dual matrix $\widehat{A}$, the smallest nonnegative integer for which $R(\widehat{A}^k)=R(\widehat{A}^{k+1})$ is called the {\it dual index} of the matrix $\widehat{A}$, and we denote it by $Ind(\widehat{A})$. If the index  of dual matrix $\widehat{A}=A+\epsilon B$ is $k$, then the index index of $A$ is also $k$, i.e., $Ind(A)=k$ (which can be prove in similar way as Theorem 2.1 of \cite{rg5}).
In 2021, Zhong and Zhang \cite{rg4} established the {\it dual group generalized inverse} (DGGI). The  definition of DGGI is stated next. Let $\widehat{A}$ be a dual matrix of size $n\times n$ with $Ind(\widehat{A})=1$. Then, the the unique dual matrix $\widehat{X}$ (if it exists) satisfying
$$\widehat{A}\widehat{X}\widehat{A}=\widehat{A}, \widehat{X}\widehat{A}\widehat{X}=\widehat{X}, \text{ and } \widehat{A}\widehat{X}=\widehat{X}\widehat{A},$$
is called the DGGI of $\widehat{A}$ and we denote it as $\widehat{A}^{\#}.$  The DGGI is used to find the dual $p$-norm solution of a dual linear system \cite{rg4}. Zhong and Zhang \cite{rg4} established the following necessary and sufficient conditions for the existence of the DGGI for a dual matrix.
\begin{lemma} (\cite{rg4})\label{rm1}\\
Let $\widehat{A}=A+\epsilon B$ be such that $A,B\in \mathbb{R}^{n\times  n},$ where $Ind(\widehat{A})=1$. Then, a dual matrix $\widehat{X}=X+\epsilon R$ of order  $n\times n$  is a dual inverse of $\widehat{A}$ iff $X=A^{\#}$ and
\begin{align*}
    B=AA^{\#}B+ARA+BA^{\#}A,\\
    R=A^{\#}AR+A^{\#}BA^{\#}+RAA^{\#},\\
    AR+BA^{\#}=RA+A^{\#}B.
\end{align*}
\end{lemma}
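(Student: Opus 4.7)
The plan is to unpack each of the three defining conditions of the DGGI under the dual arithmetic rule $\epsilon^{2}=0$, then read off the real and dual parts separately. Writing $\widehat{A}=A+\epsilon B$ and $\widehat{X}=X+\epsilon R$, each product yields a real component plus an $\epsilon$-component that is linear in $B$ and $R$, so each matrix equation splits into two real-matrix equations. This gives six real equations in total.

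First, I would expand $\widehat{A}\widehat{X}\widehat{A}=\widehat{A}$ to obtain
\begin{equation*}
AXA+\epsilon\bigl(AXB+ARA+BXA\bigr)=A+\epsilon B,
\end{equation*}
yielding the real equation $AXA=A$ and the dual equation $AXB+ARA+BXA=B$. Similarly, expanding $\widehat{X}\widehat{A}\widehat{X}=\widehat{X}$ produces $XAX=X$ together with $XAR+XBX+RAX=R$, and expanding $\widehat{A}\widehat{X}=\widehat{X}\widehat{A}$ produces $AX=XA$ together with $AR+BX=RA+XB$.

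Next, the three real-part conditions $AXA=A$, $XAX=X$, and $AX=XA$ are precisely the defining relations of the group inverse of $A$. Since $Ind(\widehat{A})=1$ forces $Ind(A)=1$ (as noted in the excerpt), $A^{\#}$ exists and is unique, so we must have $X=A^{\#}$. Substituting $X=A^{\#}$ into the three dual-part equations converts them directly into the three stated identities — in particular $XBX$ becomes $A^{\#}BA^{\#}$ in the second one — giving the forward direction. The converse is obtained by reversing these substitutions: given $X=A^{\#}$ together with the three listed equations, one reassembles the real and dual parts to recover the three defining equations of the DGGI.

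I do not anticipate a genuine obstacle; the lemma is essentially a bookkeeping exercise in separating real and dual components. The only subtlety worth flagging is making sure that the real parts are handled first (to pin down $X=A^{\#}$) before substituting back into the dual parts, and invoking the fact $Ind(A)=1$ to guarantee that $A^{\#}$ in fact exists, so that the characterization is non-vacuous.
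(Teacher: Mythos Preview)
Your proposal is correct: expanding each of the three DGGI equations using $\epsilon^{2}=0$, separating real and dual parts, and recognizing the real-part system as the defining equations of $A^{\#}$ is exactly the right argument, and your six equations and the substitution $X=A^{\#}$ are all accurate. The paper does not actually give a proof of this lemma---it is quoted from \cite{rg4}---and for the analogous DDGI lemma in Section~\ref{rm2} the authors explicitly skip the proof as ``easily obtained by following the above definition''; your approach is precisely the routine verification they have in mind.
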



In 2022, Wang and Gao \cite{rg5} proposed the  {\it dual core generalized inverse} (DCGI) and provided the relation among the DCGI, DMPGI, DGGI and MPDGI. Next, we recall the definition of DCGI. Let $\widehat{A}$ be an  $n\times n$  dual matrix with $Ind(\widehat{A})=1$. An $n\times n$  dual matrix $\widehat{X}$ (if it exists) satisfying the following matrix equations
$$\widehat{A}\widehat{X}\widehat{A}=\widehat{A},~\widehat{A}\widehat{X}^2=\widehat{X} \text{ and } (\widehat{A}\widehat{X})^T=\widehat{A}\widehat{X},$$ is called a dual core generalized invertible matrix, and $\widehat{X}$ is the dual core generalized inverse of $\widehat{A}$, which is denoted as $\core{\widehat{A}}.$ Motivated by the works of \cite{udw1,udw2,rg5}, we introduce the dual Drazin generalized inverse (DDGI) for a square dual matrix and obtain its representation. We also establish its various properties and shown its applicability to solve a dual linear system. \par
For invertible matrices $A$ and $B$, the reverse-order law and forward-order law are represented as $(AB)^{-1}=B^{-1}A^{-1}$ and $(AB)^{-1}=A^{-1}B^{-1}$, respectively. 
The reverse and forward-order laws for different generalized inverses for elements and matrices are discussed (\cite{k11, k12, k13}). But it has yet to be examined for dual matrices. This manuscript also  presents sufficient conditions for reverse and forward-order laws to hold for the DMPGI, DGGI, DCGI, and DDGI. 
 
This manuscript is organized in this way:  Section \ref{rmm1}  recalls some preliminary results. In Section \ref{rm2} we present a dual generalized inverse (generalizing the notion of dual group inverse for arbitrary index), we call it dual Drazin generalized inverse (DDGI). After that, we investigate various properties of the DDGI. Section \ref{rm3} discusses some results of  solving a dual linear system. Section \ref{rm4} establishes the reverse and forward order laws  for DMPGI, DGGI, DCGI and DDGI. Lastly, we propose partial orders based on DGGI, DDGI and DCGI.
\section{Preliminaries}\label{rmm1}
In this section, we recall certain preliminary results. These results are helpful for proving the main results of this manuscript. First we recapture the rank relation for the Drazin inverse which is proved by Tian \cite{ks1}.
\begin{theorem}(Theorem 13.25, \cite{ks1})\label{kp1}\\
Let $A,B,C\in \mathbb{C}^{n \times n}$ with $Ind(B)=k$ and $Ind(C)=l$. Then,
$$rank\left(\begin{bmatrix}
A&B^k\\C^l&0
\end{bmatrix}\right)=rank(B^k)+rank(C^l)+rank\left[(I-BB^D)A(I-CC^D)\right].$$
\end{theorem}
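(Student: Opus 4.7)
My plan is to reduce the block matrix to block diagonal form via the core--nilpotent decompositions of $B$ and $C$, and then read off the rank. Concretely, since $\mathrm{Ind}(B)=k$, there exists an invertible matrix $U$ with
$$B=U\begin{bmatrix}B_1 & 0\\ 0 & B_2\end{bmatrix}U^{-1}, \qquad B^D=U\begin{bmatrix}B_1^{-1} & 0\\ 0 & 0\end{bmatrix}U^{-1},$$
where $B_1$ is invertible of size $r\times r$ with $r=\mathrm{rank}(B^k)$, and $B_2$ is nilpotent with $B_2^k=0$. An analogous decomposition of $C$ produces $V,C_1,C_2$ with $C_1$ invertible of size $s\times s$, where $s=\mathrm{rank}(C^l)$. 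In these bases one has $B^k=U\,\mathrm{diag}(B_1^k,0)\,U^{-1}$, $C^l=V\,\mathrm{diag}(C_1^l,0)\,V^{-1}$, and the complementary projectors $I-BB^D=U\,\mathrm{diag}(0,I)\,U^{-1}$, $I-CC^D=V\,\mathrm{diag}(0,I)\,V^{-1}$.

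Next, I would apply the rank-preserving transformation $\mathrm{diag}(U^{-1},V^{-1})\,M\,\mathrm{diag}(V,U)$ to the given block matrix. Writing $U^{-1}AV=\begin{bmatrix}A_{11}&A_{12}\\A_{21}&A_{22}\end{bmatrix}$ with row partition $r\mid n-r$ and column partition $s\mid n-s$, the transformed matrix has the $4\times 4$ block form
$$\begin{bmatrix}A_{11}&A_{12}&B_1^k&0\\ A_{21}&A_{22}&0&0\\ C_1^l&0&0&0\\ 0&0&0&0\end{bmatrix}.$$
Since $C_1^l$ is invertible, two elementary block row operations with block row $3$ as pivot kill $A_{11}$ and $A_{21}$; since $B_1^k$ is invertible, one elementary block column operation with block column $3$ as pivot kills $A_{12}$. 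The result is block diagonal (up to a permutation of the blocks) with nontrivial entries $B_1^k$, $A_{22}$, and $C_1^l$, so its rank equals $r+s+\mathrm{rank}(A_{22})$.

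To finish, I would observe that
$$(I-BB^D)\,A\,(I-CC^D)=U\begin{bmatrix}0&0\\0&A_{22}\end{bmatrix}V^{-1},$$
whose rank is exactly $\mathrm{rank}(A_{22})$. Combining these pieces yields the stated identity. I expect the main obstacle to be purely bookkeeping rather than conceptual: one must choose the outer change-of-basis block diagonals carefully (note the asymmetric pairing $\mathrm{diag}(U^{-1},V^{-1})$ on the left with $\mathrm{diag}(V,U)$ on the right) so that $B^k$ and $C^l$ simultaneously become block diagonal after the transformation, and then verify that the surviving block $A_{22}$ is precisely the nontrivial part selected by the projector sandwich $(I-BB^D)A(I-CC^D)$. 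Once this alignment is set, the elementary block row and column operations are completely mechanical.
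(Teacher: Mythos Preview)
Your argument is correct: the core--nilpotent decompositions of $B$ and $C$, the change of basis $\mathrm{diag}(U^{-1},V^{-1})\cdot(\,\cdot\,)\cdot\mathrm{diag}(V,U)$, and the subsequent block eliminations all check out, and the identification of $A_{22}$ with $(I-BB^D)A(I-CC^D)$ up to invertible factors is exactly right. Note, however, that the paper does not supply its own proof of this statement---it is quoted as a preliminary result from Tian~\cite{ks1} (Theorem~13.25 there)---so there is no in-paper argument to compare against; your self-contained derivation is a perfectly standard and complete way to establish the identity.
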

The following expression to compute Drazin inverse will be used frequently.
\begin{theorem}\label{kp3}(\cite{ks1})\\ Let $A\in \mathbb{R}^{n\times n}$.
Then, $A$ has a Drazin  inverse iff there exist nonsingular matrices $P$ and $C$ such that \begin{align}\label{eq1}A=P\begin{bmatrix}
     C&0\\
     0&N
     \end{bmatrix}P^{-1}.\end{align}
 Furthermore,  \begin{align}\label{eq2}A^D=P\begin{bmatrix}
C^{-1}&0\\0&0
\end{bmatrix}P^{-1}.\end{align}
\end{theorem}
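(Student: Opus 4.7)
The plan is to invoke the classical core--nilpotent decomposition. For the necessity direction, I set $k = Ind(A)$, so by definition $R(A^k) = R(A^{k+1})$ and hence $rank(A^k) = rank(A^{k+j})$ for all $j \geq 0$. This forces $R(A^k) \cap N(A^k) = \{0\}$: if $x = A^k y$ with $A^k x = 0$, then $A^{2k} y = 0$, so $y \in N(A^{2k}) = N(A^k)$ by rank stability, giving $x = A^k y = 0$. Combined with rank--nullity this yields
$$\mathbb{R}^n = R(A^k) \oplus N(A^k),$$
and both summands are clearly $A$-invariant. Choosing $P$ whose columns form a basis of $R(A^k)$ followed by a basis of $N(A^k)$, one obtains a block-diagonal representation
$$P^{-1}AP = \begin{bmatrix} C & 0 \\ 0 & N \end{bmatrix},$$
where $C$ is the restriction of $A$ to $R(A^k)$ and $N$ is its restriction to $N(A^k)$. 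The map $A\vert_{R(A^k)}$ surjects $R(A^k)$ onto $R(A^{k+1}) = R(A^k)$, hence is a bijection on a finite-dimensional space, which gives nonsingularity of $C$; moreover $N^k = 0$ since $A^k$ vanishes on $N(A^k)$.

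For sufficiency together with the explicit formula, I would set
$$X = P\begin{bmatrix} C^{-1} & 0 \\ 0 & 0 \end{bmatrix} P^{-1}$$
and verify by direct block multiplication the three defining properties of the Drazin inverse, namely $AX = XA$, $XAX = X$, and $A^{k+1}X = A^k$. The first two reduce to $AX = XA = P\,\mathrm{diag}(I,0)\,P^{-1}$; the third uses $N^{k}=0$, which collapses the nilpotent block after enough powers. Uniqueness of the Drazin inverse then identifies this $X$ with $A^D$, giving \eqref{eq2}.

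The principal obstacle is establishing the subspace decomposition --- specifically the trivial-intersection property $R(A^k) \cap N(A^k) = \{0\}$, which is precisely the point at which the rank-stability hypothesis $R(A^k) = R(A^{k+1})$ is used. Once this is in place, the remainder is routine block bookkeeping. I note that the ``iff'' in the statement is essentially tautological over $\mathbb{R}^{n\times n}$, where every square matrix possesses a Drazin inverse; the substantive content is the explicit formula \eqref{eq2} in terms of the decomposition \eqref{eq1}, and this is what the block calculation delivers.
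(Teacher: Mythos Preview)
The paper does not prove this statement: Theorem~\ref{kp3} appears in Section~\ref{rmm1} as a preliminary result cited from \cite{ks1}, with no proof supplied. There is therefore nothing to compare your argument against within the manuscript itself.

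That said, your argument is the standard one and is correct. The core--nilpotent decomposition via $\mathbb{R}^n = R(A^k)\oplus N(A^k)$ is exactly how this result is usually established, and your verification of the three Drazin axioms for the candidate $X$ is routine and accurate. Your closing observation is also apt: as written, the ``iff'' is essentially vacuous over $\mathbb{R}^{n\times n}$ since every real square matrix has a Drazin inverse, and the paper's statement omits the crucial hypothesis that $N$ be nilpotent (which you correctly supply in your construction and use when checking $A^{k+1}X=A^k$). Without nilpotency of $N$ the formula \eqref{eq2} would fail, so the paper's phrasing is slightly loose; your proof implicitly repairs this.
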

In 2021, Wang \cite{rg3}  provided certain conditions for the existence of the DMPGI of a dual matrix. The same is produced next.
\begin{theorem}(\cite{rg3})\label{kp2}\\
Let $\widehat{A}=A+\epsilon B$, where $Ind(A)=k$. Then, the following are equivalent:
\begin{enumerate}[(i)]
    \item The DMPGI $\widehat{A}^{\dagger}$ of $\widehat{A};$
    \item $(I_m-AA^{\dagger})B(I_n-A^{\dagger}A)=0;$
\item $rank\left(\begin{bmatrix}B&A\\A&0
\end{bmatrix}\right)=2 rank(A).$
\end{enumerate}
Furthermore, if $\widehat{A}^{\dagger}$ exists, then
$$\widehat{A}^{\dagger}=A^{\dagger}-\epsilon R,$$
where $R=A^{\dagger}BA^{\dagger}-(A^TA)^{\dagger}B^T(I_m-AA^{\dagger})-(I_n-A^{\dagger}A)B^T(AA^T)^{\dagger}.$
\end{theorem}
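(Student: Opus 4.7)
The plan is to set $\widehat{A}^{\dagger} = X + \epsilon Y$ for unknown real matrices $X, Y$ and substitute into the four defining DMPGI identities, using $\epsilon^2 = 0$ to split each identity into a real part and a dual part. The four real parts are exactly the Moore--Penrose axioms for $X$ relative to $A$, so $X = A^{\dagger}$. Substituting back, the dual parts reduce to
\begin{align*}
(\text{a})\ & AYA = B - AA^{\dagger}B - BA^{\dagger}A,\\
(\text{b})\ & Y - A^{\dagger}AY - YAA^{\dagger} = A^{\dagger}BA^{\dagger},\\
(\text{c})\ & AY + BA^{\dagger}\ \text{is symmetric},\\
(\text{d})\ & YA + A^{\dagger}B\ \text{is symmetric}.
\end{align*}

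For (i)$\Leftrightarrow$(ii), the key observation is that a matrix equation of the form $AYA=M$ is consistent if and only if $AA^{\dagger}MA^{\dagger}A = M$. Applying this to (a) with $M = B - AA^{\dagger}B - BA^{\dagger}A$ and using the idempotence of $AA^{\dagger}$ and $A^{\dagger}A$ collapses the criterion to $(I_m - AA^{\dagger})B(I_n - A^{\dagger}A) = 0$, which is exactly (ii). The remaining conditions (b), (c), (d) constrain only the components of $Y$ that (a) leaves free, and the parameter count matches, so consistency of (a) guarantees solvability of the whole system.

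For (ii)$\Leftrightarrow$(iii), I would invoke Theorem~\ref{kp1} (or its immediate variant obtained by the same block row/column reduction, in which the Drazin idempotents are replaced by the Moore--Penrose idempotents $AA^{\dagger}$ and $A^{\dagger}A$ when both diagonal blocks are taken equal to $A$). The rank formula then reads
\[
\operatorname{rank}\begin{bmatrix} B & A\\ A & 0\end{bmatrix} = 2\operatorname{rank}(A) + \operatorname{rank}\bigl[(I_m-AA^{\dagger})B(I_n-A^{\dagger}A)\bigr],
\]
so (iii) is the rank-zero restatement of (ii).

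To read off the explicit formula for $R$, note that once (ii) holds, the general solution of (a) is
\[
Y = -A^{\dagger}BA^{\dagger} + (I_n - A^{\dagger}A)P + Q(I_m - AA^{\dagger}),
\]
for arbitrary $P$ and $Q$ (the particular piece $A^{\dagger}MA^{\dagger}$ simplifies to $-A^{\dagger}BA^{\dagger}$). Substituting into (c) and (d) and using the standard identities $(A^{\dagger})^T = (A^T)^{\dagger}$, $(A^TA)^{\dagger}A^T = A^{\dagger}$, and $A^T(AA^T)^{\dagger} = A^{\dagger}$ pins down $Q(I_m-AA^{\dagger})$ and $(I_n-A^{\dagger}A)P$ uniquely, while (b) kills the residual piece $(I_n - A^{\dagger}A)Y(I_m - AA^{\dagger})$. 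The resulting $Y$ is
\[
Y = -A^{\dagger}BA^{\dagger} + (A^TA)^{\dagger}B^T(I_m - AA^{\dagger}) + (I_n - A^{\dagger}A)B^T(AA^T)^{\dagger},
\]
so $\widehat{A}^{\dagger} = A^{\dagger} + \epsilon Y = A^{\dagger} - \epsilon R$ with $R$ as stated.

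The main obstacle I anticipate is the bookkeeping in this last step: one must verify that the four dual equations (a)--(d), once (a) is consistent, determine $Y$ uniquely and that the unique $Y$ coincides with the closed form above. The cleanest route is to work in an SVD of $A$, where each of (a)--(d) constrains a distinct $2\times 2$ block of $Y$ against the corresponding block of $B$; a coordinate-free proof using only the four Moore--Penrose axioms of $A$ together with the compatibility identities listed above is also feasible but more finicky.
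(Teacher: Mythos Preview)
The paper does not supply its own proof of this theorem: it is quoted in the Preliminaries section with a citation to Wang~\cite{rg3} and used later as a black box (notably in the (iv)$\iff$(v) step of the main DDGI characterization). So there is no in-paper argument to compare your proposal against.

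That said, your outline is the standard route and is sound. Splitting the four Moore--Penrose identities into real and dual parts forces $X=A^{\dagger}$ and yields exactly the four conditions (a)--(d) you list. One small sharpening: condition (b) already pins down two of the four ``blocks'' of $Y$ relative to the projections $P=A^{\dagger}A$ and $Q=AA^{\dagger}$, namely $PYQ=-A^{\dagger}BA^{\dagger}$ and $(I-P)Y(I-Q)=0$; feeding the first of these into (a) is precisely what collapses the consistency requirement to (ii), so the hand-wave ``the parameter count matches'' can be replaced by a clean block argument. Conditions (c) and (d) then determine the off-diagonal blocks $PY(I-Q)$ and $(I-P)YQ$, giving the closed form for $R$. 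For (ii)$\iff$(iii), note that Theorem~\ref{kp1} as stated involves Drazin idempotents and powers $B^k$, $C^l$; you are right that the Moore--Penrose analogue (a Schur-complement rank identity with $AA^{\dagger}$ and $A^{\dagger}A$) is what is actually needed here, and it follows by the same block row/column reduction.
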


Similar result for group inverse is proved by Zhong and Zhang \cite{rg4}.
\begin{theorem}(\cite{rg4})\label{thm2}\\
    Let $\widehat{A}=A+\epsilon B$ be a dual matrix with $A,B\in\mathbb{R}^{n\times n}$, where $Ind(\widehat{A})=1$. Then, the following conditions are equivalent:
    \begin{enumerate}[(i)]
        \item The dual group inverse of $\widehat{A}$ exists;
        \item $\widehat{A}=P\begin{bmatrix}
            C&0\\0&0
        \end{bmatrix}P^{-1}+\epsilon P\begin{bmatrix}
            B_1&B_2\\B_3&0
        \end{bmatrix}P^{-1}$, where $C$ and $P$ are nonsingular matrices;
        \item $(I-AA^{\#})B(I-AA^{\#})=0$;
        \item $\begin{bmatrix}
            B&A\\A&0
        \end{bmatrix}^{\#}$ exists;
        \item $rank\begin{bmatrix}
            B&A\\A&0
        \end{bmatrix}=2rank(A)$\\
        
        Furthermore, if the dual group inverse of $\widehat{A}$ exists, then
        \begin{align}
            \widehat{A}^{\#}=A^{\#}+\epsilon R,
        \end{align}
        where $R=-A^{\#}BA^{\#}+(A^{\#})^2B(I-AA^{\#})+(I-AA^{\#})B(A^{\#})^2$.
        \end{enumerate}
\end{theorem}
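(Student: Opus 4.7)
The plan is to pass everything through the canonical block decomposition of $A$. Because $Ind(\widehat A)=1$ forces $Ind(A)=1$, Theorem \ref{kp3} yields nonsingular $P,C$ with
\[
A=P\begin{bmatrix} C & 0 \\ 0 & 0 \end{bmatrix}P^{-1},\qquad A^{\#}=P\begin{bmatrix} C^{-1} & 0 \\ 0 & 0 \end{bmatrix}P^{-1},
\]
so $AA^{\#}=A^{\#}A=P\begin{bmatrix} I & 0 \\ 0 & 0 \end{bmatrix}P^{-1}$. Writing $B=P\begin{bmatrix} B_1 & B_2 \\ B_3 & B_4 \end{bmatrix}P^{-1}$ and a candidate dual part $R=P\begin{bmatrix} R_1 & R_2 \\ R_3 & R_4 \end{bmatrix}P^{-1}$, every condition in the theorem reduces to linear algebra on the four blocks.

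I would first establish the chain (i)$\Leftrightarrow$(ii)$\Leftrightarrow$(iii)$\Leftrightarrow$(v) and then handle (iv) separately. For (i)$\Leftrightarrow$(ii), substitute the block forms into the three conditions of Lemma \ref{rm1} and equate blockwise: the first equation forces $B_4=0$ and $R_1=-C^{-1}B_1C^{-1}$, the second reproduces $R_1$ and forces $R_4=0$, and the third fixes $R_2=C^{-2}B_2$ and $R_3=B_3C^{-2}$. Thus a solvable $R$ exists iff $B_4=0$, which is exactly the block form in (ii); evaluating each summand of $-A^{\#}BA^{\#}+(A^{\#})^2B(I-AA^{\#})+(I-AA^{\#})B(A^{\#})^2$ in the $P$-coordinates reproduces exactly these four block entries and confirms the explicit formula claimed at the end. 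For (ii)$\Leftrightarrow$(iii) a direct computation gives $(I-AA^{\#})B(I-AA^{\#})=P\begin{bmatrix} 0 & 0 \\ 0 & B_4 \end{bmatrix}P^{-1}$, which vanishes exactly when $B_4=0$. For (iii)$\Leftrightarrow$(v), apply Theorem \ref{kp1} with both index-one matrices equal to $A$ (using $A^{D}=A^{\#}$); the resulting identity
\[
\operatorname{rank}\begin{bmatrix} B & A \\ A & 0 \end{bmatrix}=2\operatorname{rank}(A)+\operatorname{rank}\bigl[(I-AA^{\#})B(I-AA^{\#})\bigr]
\]
makes the equivalence immediate.

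The main obstacle is fitting (iv) into this chain, since Theorem \ref{kp1} gives only a rank identity rather than a group-inverse existence criterion for the $2n\times 2n$ block matrix $M=\begin{bmatrix} B & A \\ A & 0 \end{bmatrix}$. I would use that $M^{\#}$ exists iff $\operatorname{Ind}(M)=1$, equivalently $\operatorname{rank}(M)=\operatorname{rank}(M^2)$. For (ii)$\Rightarrow$(iv), with $B_4=0$ the matrix $M$ becomes sparse enough in $(P\oplus P)$-coordinates to admit an explicit candidate for $M^{\#}$, which I would verify against the three defining axioms $MM^{\#}M=M$, $M^{\#}MM^{\#}=M^{\#}$, and $MM^{\#}=M^{\#}M$. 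For (iv)$\Rightarrow$(v), I would expand $M^2$ in the same block coordinates, perform row/column reductions using the invertible block $C$, and show that the equality $\operatorname{rank}(M^2)=\operatorname{rank}(M)$ is compatible with the formula from Theorem \ref{kp1} only when $B_4=0$, forcing (v). The explicit construction of $M^{\#}$ and the rank reduction of $M^2$ are the most calculation-heavy steps I anticipate.
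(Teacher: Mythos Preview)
The paper does not give its own proof of this statement: Theorem~\ref{thm2} is quoted from \cite{rg4} in the preliminaries and is stated without argument. The closest thing in the paper is the proof of the DDGI representation theorem in Section~\ref{rm2}, which is the index-$k$ generalization; when $k=1$ that proof collapses to (most of) the present one.

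Your plan for (i)$\Leftrightarrow$(ii)$\Leftrightarrow$(iii) and (iii)$\Leftrightarrow$(v) is essentially identical to what the paper does in the DDGI proof: pass to the $P$-block decomposition of $A$, read off the block equations from the defining relations (Lemma~\ref{rm1} here, equations \eqref{eqn3.1}--\eqref{eqn3.3} there), identify the obstruction as the vanishing of the bottom-right block of $B$, and invoke Theorem~\ref{kp1} for the rank equivalence. Your verification of the closed formula for $R$ by summing the three block pieces also mirrors the paper's computation of $-A^DBA^D$, $(A^D)^{k+1}D(I-AA^D)$, and $(I-AA^D)D(A^D)^{k+1}$ at the end of that proof.

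Condition (iv), the existence of the group inverse of $\begin{bmatrix} B & A \\ A & 0\end{bmatrix}$, has no counterpart in the paper's DDGI theorem (there the list runs through a rank condition and a DMPGI existence statement handled via Theorem~\ref{kp2}), so there is nothing in the present paper to compare your treatment of (iv) against. Your route (ii)$\Rightarrow$(iv) by exhibiting an explicit $M^{\#}$ is sound in principle. The direction (iv)$\Rightarrow$(v) via $\operatorname{rank}(M)=\operatorname{rank}(M^{2})$ is where your outline is thinnest: you assert that this equality, combined with the Theorem~\ref{kp1} formula $\operatorname{rank}(M)=2\operatorname{rank}(A)+\operatorname{rank}(B_4)$, forces $B_4=0$, but that requires actually computing or bounding $\operatorname{rank}(M^{2})$ in the $P$-coordinates and checking it drops strictly below $\operatorname{rank}(M)$ whenever $B_4\neq 0$. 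That step is not yet in your sketch and is the one place where the argument could still fail if not carried out carefully.
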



\section{Dual Drazin generalized inverse (DDGI)}\label{rm2}
This section obtains the necessary and sufficient condition for the existence of the DDGI. We provide a characterization of the DDGI. Note that throughout this article, we consider the representation the matrix $A$ and its $A^D$ from Theorem \ref{kp3}. Now we start this section with a definition.
\begin{definition}
       Let $\widehat{A} = A + \epsilon B$ be such that $A, B \in\mathbb{ R}^{n\times n}$, where $ Ind(\widehat{A})= k$.  A dual  matrix $\widehat{G}$ (if it exists) is called the {\bf dual Drazin generalized inverse} (DDGI) of $\widehat{A}$ if it satisfies the conditions: 
\begin{eqnarray*}
\widehat{A}^{k}\widehat{X}\widehat{A} &= \widehat{A}^{k},\\~\widehat{X}\widehat{A}\widehat{X} &= \widehat{X},\\ \widehat{A}\widehat{X}& =\widehat{X}\widehat{A}.
\end{eqnarray*}
 It is denoted by $\widehat{X}=\widehat{A}^{D}$.
\end{definition}
Next result provides a necessary and sufficient condition for the existence of the DDGI. The proof can be easily obtained by following the above definition and is, therefore, skipped.

\begin{lemma}
Let $\widehat{A} = A + \epsilon B$ be such that $A, B \in\mathbb{ R}^{n\times n}$, where $ Ind(\widehat{A})= k$.
Then, a square $n\times n$ dual matrix $\widehat{X} = X+\epsilon R$ is a dual Drazin inverse of $\widehat{A}$ iff $X = A^{D}$ and
\begin{equation}\label{eqn3.1}
    A^kA^DB+A^kRA+(A^{k-1}B+A^{k-2}BA+\cdots+ABA^{k-2}+BA^{k-1})(A^DA-I)=0,
\end{equation}
\begin{equation}\label{eqn3.2}
    R=A^{D}AR+A^{D}BA^{D}+RAA^{D},
\end{equation}
\begin{equation}\label{eqn3.3}
    AR + BA^{D} = RA + A^DB.
\end{equation}
\end{lemma}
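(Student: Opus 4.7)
The plan is to substitute $\widehat{A}=A+\epsilon B$ and $\widehat{X}=X+\epsilon R$ into each of the three defining conditions, expand using $\epsilon^{2}=0$, and separate the resulting identities into their real and dual ($\epsilon^{1}$) parts. The single computation requiring care is the expansion of $\widehat{A}^{k}$: because $\epsilon^{2}=0$, at most one of the $k$ factors may contribute a $B$, so
\begin{equation*}
\widehat{A}^{k}=A^{k}+\epsilon\, S_{k}, \qquad S_{k}:=A^{k-1}B+A^{k-2}BA+\cdots+ABA^{k-2}+BA^{k-1}.
\end{equation*}

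First I would collect the three real-part equations. Condition $\widehat{A}\widehat{X}=\widehat{X}\widehat{A}$ yields $AX=XA$; condition $\widehat{X}\widehat{A}\widehat{X}=\widehat{X}$ yields $XAX=X$; and the real part of $\widehat{A}^{k}\widehat{X}\widehat{A}=\widehat{A}^{k}$ gives $A^{k}XA=A^{k}$, which combined with $AX=XA$ upgrades to $A^{k+1}X=A^{k}$. Since $Ind(A)=Ind(\widehat{A})=k$ (as noted in the introduction), these three relations are precisely the defining equations of the Drazin inverse of $A$, and uniqueness of the Drazin inverse forces $X=A^{D}$.

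Next I would read off the $\epsilon$-coefficients. The dual part of $\widehat{A}^{k}\widehat{X}\widehat{A}=\widehat{A}^{k}$ becomes $A^{k}XB+A^{k}RA+S_{k}XA=S_{k}$; substituting $X=A^{D}$ and rewriting $S_{k}-S_{k}A^{D}A=-S_{k}(A^{D}A-I)$ reproduces (\ref{eqn3.1}). The dual part of $\widehat{X}\widehat{A}\widehat{X}=\widehat{X}$ becomes $RAX+XBX+XAR=R$, which with $X=A^{D}$ is (\ref{eqn3.2}). The dual part of $\widehat{A}\widehat{X}=\widehat{X}\widehat{A}$ becomes $AR+BX=RA+XB$, which with $X=A^{D}$ is (\ref{eqn3.3}). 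The converse direction is pure bookkeeping in reverse: assuming $X=A^{D}$ together with (\ref{eqn3.1})--(\ref{eqn3.3}), reassembling the $\epsilon^{0}$- and $\epsilon^{1}$-parts shows that $\widehat{X}=A^{D}+\epsilon R$ satisfies every axiom of the DDGI.

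I expect the main obstacle to be the handling of $\widehat{A}^{k}$: getting the sum $S_{k}$ written with the terms in the correct order, and leveraging $AX=XA$ to promote the identity $A^{k}XA=A^{k}$ to the genuine Drazin relation $A^{k+1}X=A^{k}$. Once those two steps are clean, everything else is a routine matching of $\epsilon^{0}$- and $\epsilon^{1}$-coefficients together with an appeal to the uniqueness of $A^{D}$.
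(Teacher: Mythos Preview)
Your proposal is correct and is exactly the approach the paper has in mind: the paper states that ``the proof can be easily obtained by following the above definition and is, therefore, skipped,'' and your substitution-and-coefficient-matching argument is precisely that routine verification. Your handling of the only nontrivial point---expanding $\widehat{A}^{k}=A^{k}+\epsilon S_{k}$ and using $AX=XA$ to upgrade $A^{k}XA=A^{k}$ to $A^{k+1}X=A^{k}$ so that uniqueness of the Drazin inverse forces $X=A^{D}$---is clean and matches what the paper leaves implicit.
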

If $Ind(A)=1$, then the DDGI called the DGGI and the above lemma coincide with the Lemma \ref{rm1}. The dual group inverse doesn't exist for every square matrix. This is shown next.
\begin{example} Let
$\widehat{A}=\begin{bmatrix}
4&0&0\\
0&0&0\\
0&0&5
\end{bmatrix}+\epsilon
\begin{bmatrix}
1&0&4\\
1&2&0\\
0&2&0
\end{bmatrix}=A+\epsilon B$. Then, $A^{\#}=\begin{bmatrix}
0.25& 0&0\\0&0&0\\
0&0&0.2
\end{bmatrix}$. Suppose $R=\begin{bmatrix}
r_{11}&r_{12}&r_{13}\\
r_{21}&r_{22}&r_{23}\\
r_{31}&r_{32}&r_{33}
\end{bmatrix}$,  i.e., $AA^{\#}B+ARA+BA^{\#}A-B=\begin{bmatrix}
16r_{11}+1&0&12r_{13}+4\\
0&-2&0\\
12r_{31}&0&9r_{33}
\end{bmatrix}$. For the  $\widehat{A}$ dual group invertible must be $AA^{\#}B+ARA+BA^{\#}A-B=0$. But  $AA^{\#}B+ARA+BA^{\#}A-B\neq0$. Hence, $\widehat{A}$ is not group invertible.
\end{example}
The next result shows that the DDGI is unique whenever it exists.
\begin{theorem}
Let $\widehat{A} = A + \epsilon B$ be such that $A, B \in\mathbb{ R}^{n\times n}$, where $Ind(\widehat{A}) = k$. If $\widehat{A}^D$ exists, then it is unique.
\end{theorem}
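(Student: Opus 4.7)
The plan is to reduce the uniqueness claim to showing that a homogeneous linear system on the dual part admits only the trivial solution, and then to dispatch that system using the block decomposition of $A$ supplied by Theorem~\ref{kp3}. Suppose $\widehat{X}_1 = X_1 + \epsilon R_1$ and $\widehat{X}_2 = X_2 + \epsilon R_2$ are both DDGIs of $\widehat{A}$. By the characterization lemma stated immediately above, $X_1 = X_2 = A^D$, so the real parts coincide automatically and it suffices to show $R_1 = R_2$.

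Set $\Delta = R_1 - R_2$. Subtracting the instances of \eqref{eqn3.2} and \eqref{eqn3.3} corresponding to $R_1$ and $R_2$, the $B$-dependent inhomogeneous terms cancel, leaving the homogeneous pair $\Delta = A^D A\,\Delta + \Delta\,AA^D$ and $A\Delta = \Delta A$. (The analogous subtraction of \eqref{eqn3.1} yields $A^k\Delta A = 0$, which turns out to be redundant with the two relations above.) The task therefore reduces to showing that these two relations force $\Delta = 0$.

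To achieve this, I invoke Theorem~\ref{kp3}: conjugating by the similarity matrix, which preserves both relations, I may assume $A = \begin{bmatrix} C & 0 \\ 0 & N \end{bmatrix}$ with $C$ invertible and $N^k = 0$, so that $A^D = \begin{bmatrix} C^{-1} & 0 \\ 0 & 0 \end{bmatrix}$ and the spectral projector $AA^D = A^D A = \begin{bmatrix} I & 0 \\ 0 & 0 \end{bmatrix}$. Partitioning $\Delta = \begin{bmatrix} \Delta_{11} & \Delta_{12} \\ \Delta_{21} & \Delta_{22} \end{bmatrix}$ conformably, the first relation produces $\Delta_{11} = 2\Delta_{11}$ and $\Delta_{22} = 0$, forcing $\Delta_{11} = \Delta_{22} = 0$, while leaving the off-diagonal blocks unconstrained. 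The commutator $A\Delta = \Delta A$ then reduces, on the $(1,2)$ and $(2,1)$ blocks, to the Sylvester-type identities $C\Delta_{12} = \Delta_{12} N$ and $N\Delta_{21} = \Delta_{21} C$; iterating $k$ times gives $C^k \Delta_{12} = \Delta_{12} N^k = 0$ and $N^k \Delta_{21} = 0 = \Delta_{21} C^k$, and the invertibility of $C^k$ forces $\Delta_{12} = \Delta_{21} = 0$. Hence $\Delta = 0$, so $R_1 = R_2$ and $\widehat{X}_1 = \widehat{X}_2$. The only step requiring genuine thought is the elimination of the off-diagonal blocks via this Sylvester-type iteration; the remainder is routine bookkeeping built on the core-nilpotent decomposition.
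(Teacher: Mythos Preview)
Your argument is correct. The paper's proof takes a different, coordinate-free route: it subtracts the instances of \eqref{eqn3.1} to obtain $A^k\Delta A=0$, multiplies by $(A^D)^{k+1}$ and uses the commutation $A\Delta=\Delta A$ (from \eqref{eqn3.3}) to pull $\Delta$ past the $A$-powers, arriving at $\Delta AA^D=0$ (and hence $AA^D\Delta=0$ by commutation with the spectral projector); then \eqref{eqn3.2} immediately gives $\Delta=0$. By contrast, you discard \eqref{eqn3.1} entirely and work in the core--nilpotent block frame of Theorem~\ref{kp3}, eliminating the diagonal blocks from the idempotent relation and the off-diagonal blocks via the Sylvester iteration $C^k\Delta_{12}=\Delta_{12}N^k=0$. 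Your approach has the advantage of showing that uniqueness already follows from \eqref{eqn3.2} and \eqref{eqn3.3} alone, and it is entirely self-contained; the paper's approach avoids choosing coordinates but tacitly relies on the fact that anything commuting with $A$ also commutes with $A^D$ (equivalently, that $A^D$ is a polynomial in $A$) to pass from $\Delta AA^D=0$ to $AA^D\Delta=0$.
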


\begin{proof}
Assume $\widehat{X} = X+\epsilon R_1$ and $\widehat{X_1} = X+\epsilon R_2$ be two dual Drazin inverses of $\widehat{A}$. Then, $R_1$ and $R_2$ satisfy the set of equations \eqref{eqn3.1}, \eqref{eqn3.2} and \eqref{eqn3.3}. Now, by \eqref{eqn3.1}, we have $A^k(R_1-R_2)A=0$. Post-multiplying by $(A^D)^{k+1}$, we get 
\begin{equation}\label{eqn3.4}
    A^k(R_1-R_2)A(A^D)^{k+1}=0.
\end{equation}
Equation \eqref{eqn3.3} gives 
\begin{equation}\label{eqn3.5}
    A(R_1-R_2)=(R_1-R_2)A.
\end{equation}
Using \eqref{eqn3.5} in \eqref{eqn3.4}, we get
$(R_1-R_2)A^kA(A^D)^{k+1}=0$ which further implies that 
\begin{equation}\label{eqn3.6}
    {AA^D(R_1-R_2)=(R_1-R_2)AA^D=0.}
\end{equation}
From \eqref{eqn3.2} and \eqref{eqn3.6}, 
\begin{equation*}
    R_1-R_2=AA^D(R_1-R_2)+(R_1-R_2)AA^D=0.
\end{equation*}
Hence, $\widehat{A}^D$ is unique.
\end{proof}
Next result is the primary result of this section in which we provide a representation of the DDGI. 

\begin{theorem}
Let $\widehat{A} = A+\epsilon B$ be such that $A, B \in\mathbb{ R}^{n\times n}$, where $Ind(\widehat{A}) = k$. Then, the following  are equivalent:
\begin{enumerate}[(i)]
\item $\widehat{A}^D$ exists;
   \item $\widehat{A}=P\begin{bmatrix}C &0\\
    0 &N\end{bmatrix}P^{-1}+\epsilon P\begin{bmatrix}B_1 &B_2\\B_3 &B_4\end{bmatrix}P^{-1}$, where $P$ and $C$ are nonsingular and
    $$N^{k-1}B_4+N^{k-2}B_4N+\cdots+B_4N^{k-1}=0;$$
    \item $(I-AA^D)(A^{k-1}B+A^{k-2}BA+\cdots+ABA^{k-2}+BA^{k-1})(A^DA-I)=0$;
    \item $rank\begin{bmatrix} D&A^k\\ A^k&0\end{bmatrix}=2rank(A^k)$,
    where
    $$D=(A^{k-1}B+A^{k-2}BA+\cdots+ABA^{k-2}+BA^{k-1});$$
    \item  $\widehat{C}^{\dagger}$ exists, where $\widehat{C}=A^k+\epsilon D$. Further, if  $\widehat{A}^D$ exists, then
    $\widehat{A}^D=A^D+\epsilon R$, where\\
    $$R = -A^DBA^D+(A^D)^{(k+1)}D(I- AA^D) + (I-AA^D)D(A^D)^{(k+1)}.$$ 
\end{enumerate}
\end{theorem}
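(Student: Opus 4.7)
The plan is to use the canonical block decomposition of $A$ from Theorem~\ref{kp3} to reduce each of (ii)--(v) to the vanishing of a single block, and then handle (i) by showing the defining equations \eqref{eqn3.1}--\eqref{eqn3.3} admit a solution precisely under that same condition.

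First I would fix $A = P\begin{bmatrix} C & 0 \\ 0 & N\end{bmatrix}P^{-1}$ with $C$ nonsingular and $N^k=0$, so that $A^D = P\begin{bmatrix} C^{-1} & 0 \\ 0 & 0\end{bmatrix}P^{-1}$, and write $B = P\begin{bmatrix} B_1 & B_2 \\ B_3 & B_4\end{bmatrix}P^{-1}$. A direct block multiplication gives the $(2,2)$ block of $D = \sum_{j=0}^{k-1} A^{j} B A^{k-1-j}$ as $\sum_{j=0}^{k-1} N^{j} B_4 N^{k-1-j}$. Since $I-AA^D$ and $A^DA-I$ act as $\pm P\,\mathrm{diag}(0,I)\,P^{-1}$ in this basis, one gets
\[
(I-AA^D)\,D\,(A^DA-I) \;=\; -P\begin{bmatrix} 0 & 0 \\ 0 & \sum_{j=0}^{k-1} N^{j} B_4 N^{k-1-j}\end{bmatrix}P^{-1},
\]
which vanishes exactly under the $B_4$ condition in (ii); this gives (ii)~$\Leftrightarrow$~(iii).

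Next, for (iii)~$\Leftrightarrow$~(iv), I would apply Theorem~\ref{kp1} with both auxiliary matrices there replaced by $A$ (each of index $k$), obtaining
\[
\mathrm{rank}\begin{bmatrix} D & A^k \\ A^k & 0\end{bmatrix} \;=\; 2\,\mathrm{rank}(A^k) + \mathrm{rank}\bigl[(I-AA^D)\,D\,(I-AA^D)\bigr].
\]
Since $A^DA = AA^D$, this residual rank vanishes iff (iii) holds. For (iv)~$\Leftrightarrow$~(v), I would apply Theorem~\ref{kp2} to the dual matrix $\widehat{C} = A^k + \epsilon D$: the equivalence of (i) and (iii) in that theorem gives the existence of $\widehat{C}^{\dagger}$ iff the identical rank identity holds.

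Finally, for (i)~$\Leftrightarrow$~(iii) together with the explicit formula, the forward direction comes from pre-multiplying \eqref{eqn3.1} by $I-AA^D$: the identity $(I-AA^D)A^k = 0$ annihilates the first two terms and leaves exactly (iii). For the converse I would take $R = -A^{D}BA^{D} + (A^{D})^{k+1} D (I-AA^D) + (I-AA^D) D (A^{D})^{k+1}$ and verify \eqref{eqn3.1}--\eqref{eqn3.3} by direct substitution, relying on the Drazin identities $A^DAA^D = A^D$, $A^DA = AA^D$, $(AA^D)^2 = AA^D$, $A^k(A^D)^{k+1} = A^D$, and $(I-AA^D)A^k = 0$. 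Equations \eqref{eqn3.2} and \eqref{eqn3.3} fall out cleanly because the three summands of $R$ are supported on complementary invariant subspaces of the projector $AA^D$, making the cross terms either idempotent-preserved or zero. The main obstacle will be \eqref{eqn3.1}: after simplification, its residual reduces to a multiple of $(I-AA^D)\,D\,(I-AA^D)$ (which is exactly what (iii) kills), together with a $(1,2)$-block term whose vanishing is the telescoping identity $CD_{12} - D_{12}N = C^k B_2$. Tracking this telescoping structure through the substitution is the delicate part, and is best carried out in the block coordinates set up in the first paragraph.
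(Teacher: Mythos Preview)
Your proposal is correct and follows essentially the same route as the paper: both arguments hinge on the canonical block decomposition of Theorem~\ref{kp3}, invoke Theorem~\ref{kp1} for (iii)$\Leftrightarrow$(iv) and Theorem~\ref{kp2} for (iv)$\Leftrightarrow$(v), and verify the candidate $R$ against \eqref{eqn3.1}--\eqref{eqn3.3} by block computation (the paper does the entire verification in block coordinates rather than mixing in abstract Drazin identities, but the content is identical, including the telescoping identity you isolate for the $(1,2)$ block). Your one-line derivation of (i)$\Rightarrow$(iii) by premultiplying \eqref{eqn3.1} with $I-AA^D$ is a genuine streamlining over the paper, which instead expands \eqref{eqn3.1} fully in block form to reach (ii) and only then passes to (iii).
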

\begin{proof}
(i)$\implies$(ii):\\
Let $B=P\begin{bmatrix}B_1 &B_2\\B_3 &B_4
\end{bmatrix}P^{-1}$ and $R=P\begin{bmatrix}R_1 &R_2\\R_3 &R_4
\end{bmatrix}P^{-1}$ and suppose (i) holds, then using \eqref{eqn3.1}, clearly
\begin{align}\label{eqn3.8}
   &A^{k}A^DB+A^{k}RA+(A^{k-1}B+A^{k-2}BA+\cdots+ABA^{k-2}+BA^{k-1})(A^DA-I)=0,\notag
   \end{align}
   which implies that
{\scriptsize{ \begin{align*}  
&P\begin{bmatrix}C^k &0\\0 &0
\end{bmatrix}\begin{bmatrix}C^{-1} &0\\0 &0
\end{bmatrix}\begin{bmatrix}B_1 &B_2\\B_3 &B_4
\end{bmatrix}P^{-1}+P\begin{bmatrix}C^k &0\\0 &0
\end{bmatrix}\begin{bmatrix}R_1 &R_2\\R_3 &R_4
\end{bmatrix}\begin{bmatrix}C &0\\0 &N
\end{bmatrix}P^{-1}+ \Bigg(P\begin{bmatrix}C^{k-1} &0\\0 &N^{k-1}
\end{bmatrix}\begin{bmatrix}B_1 &B_2\\B_3 &B_4
\end{bmatrix}P^{-1}\\
&~~~~+P\begin{bmatrix}C^{k-2} &0\\0 &N^{k-2}
\end{bmatrix}\begin{bmatrix}B_1 &B_2\\B_3 &B_4
\end{bmatrix}\begin{bmatrix}C &0\\0 &N\end{bmatrix}P^{-1}+\dots+P\begin{bmatrix}B_1 &B_2\\B_3 &B_4\end{bmatrix}\begin{bmatrix}C^{k-1} &0\\0 &N^{k-1}\end{bmatrix}P^{-1}\Bigg)P
\begin{bmatrix}0&0\\0 &-I\end{bmatrix}P^{-1}=0,
\end{align*}}}
i.e.,
{\scriptsize{\begin{align}
&P\begin{bmatrix}C^{k-1}B_1 &C^{k-1}B_2\\0 &0
\end{bmatrix}P^{-1}+P\begin{bmatrix}C^{k}R_1C &C^{k}R_2N\\0 &0
\end{bmatrix}P^{-1}+\notag\\
&P\begin{bmatrix}C^{k-1}B_1+C^{k-2}B_1C+\cdots+B_1C^{k-1}&C^{k-1}B_2+C^{k-2}B_2N+\cdots+CB_2N^{k-2}+B_2N^{k-1}\\N^{k-1}B_3+N^{k-2}B_3C+\cdots+NB_3C^{k-2}+B_3C^{k-1}&N^{k-1}B_4+N^{k-2}B_4N+\cdots+NB_4N^{k-2}+B_4C^{k-1}
\end{bmatrix}\begin{bmatrix}0&0\\0 &-I\end{bmatrix}P^{-1}=0,\notag
\end{align}}}
i.e.,
{\scriptsize{\begin{align}
P\begin{bmatrix}C^{k-1}B_1+C^{k}R_1C &C^{k-1}B_2+C^{k}R_2N\\0 &0
\end{bmatrix}P^{-1}
+P\begin{bmatrix}0&-(C^{k-1}B_2+C^{k-2}B_2N+\cdots+CB_2N^{k-2}+B_2N^{k-1})\\0&-(N^{k-1}B_4+N^{k-2}B_4N+\cdots+NB_4N^{k-2}+B_4C^{k-1})
\end{bmatrix}P^{-1}=0,\notag
\end{align}}}
which further implies that $$C^{k-1}B_1+C^kR_1C=0,$$ $$C^{k-1}B_2+C^kR_2N-(C^{k-1}B_2+C^{k-2}B_2N+\cdots+CB_2N^{k-2}+B_2N^{k-1})=0,$$ and $$N^{k-1}B_4+N^{k-2}B_4N+\cdots+NB_4N^{k-2}+B_4C^{k-1}=0.$$\\
$(ii) \implies (iii)$: This can be shown by similar computations.\\
$(iii)\implies (i)$ By Theorem \ref{kp3}, there exist nonsingular matrices $P$ and $C$ such that $A$ and
$A^D$ are of the form \eqref{eq1} and \eqref{eq2}, respectively. Now, 
$$(I-AA^D)(A^{k-1}B+A^{k-2}BA+\cdots+ABA^{k-2}+BA^{k-1})(A^DA-I)=0,$$ implies that $N^{k-1}B_4+N^{k-2}B_4N+\cdots+NB_4N^{k-2}+B_4C^{k-1}=0.$ Assume 
{\scriptsize{$$\widehat{X}=P\begin{bmatrix}
C^{-1}&0\\0&0
\end{bmatrix}P^{-1}+\epsilon P\begin{bmatrix}
-C^{-1}B_1C^{-1}& C^{-2}B_2+C^{-3}B_2N+\cdots+C^{-(k+1)}N^{k-1}\\B_3C^{-2}+NB_3C^{-3}+\cdots+N^{k-1}B_3C^{-(k+1)}&0
\end{bmatrix}P^{-1}.$$}}
Further,
{\scriptsize{\begin{align*}
\widehat{A}\widehat{X}&=\left(P\begin{bmatrix}C &0\\
    0 &N\end{bmatrix}P^{-1}+\epsilon P\begin{bmatrix}B_1 &B_2\\B_3 &B_4\end{bmatrix}P^{-1}\right)\Bigg(P\begin{bmatrix}
C^{-1}&0\\0&0
\end{bmatrix}P^{-1}\\
&~~~+\epsilon P\begin{bmatrix}
-C^{-1}B_1C^{-1}& C^{-2}B_2+C^{-3}B_2N+\cdots+C^{-(k+1)}N^{k-1}\\B_3C^{-2}+NB_3C^{-3}+\cdots+N^{k-1}B_3C^{-(k+1)}&0
\end{bmatrix}P^{-1}\Bigg)\\
&=P\begin{bmatrix}
I&0\\0&0
\end{bmatrix}P^{-1}+\epsilon P\begin{bmatrix}
0&C^{-1}B_2+C^{-2}B_2N+\cdots+C^{-k}N^{k-1}\\B_3C^{-1}+NB_3C^{-2}+N^2B_3C^{-3}+\cdots+N^{k-1}B_3C^{-k}&0\end{bmatrix}P^{-1},
\end{align*}}}
 {\scriptsize{\begin{align*}
    \widehat{X}\widehat{A}&=
  \left(P\begin{bmatrix}
C^{-1}&0\\0&0
\end{bmatrix}P^{-1}+\epsilon P\begin{bmatrix}
-C^{-1}B_1C^{-1}& C^{-2}B_2+C^{-3}B_2N+\cdots+C^{-(k+1)}N^{k-1}\\B_3C^{-2}+NB_3C^{-3}+\cdots+N^{k-1}B_3C^{-(k+1)}&0
\end{bmatrix}P^{-1}\right)\\
&~~~~~\left(P\begin{bmatrix}C &0\\
    0 &N\end{bmatrix}P^{-1}+\epsilon P\begin{bmatrix}B_1 &B_2\\B_3 &B_4\end{bmatrix}P^{-1}\right)\\
&=P\begin{bmatrix}
I&0\\0&0
\end{bmatrix}P^{-1}+\epsilon P\begin{bmatrix}
0&C^{-1}B_2+C^{-2}B_2N^{1}+\cdots+C^{-k}N^{k-1}\\B_3C^{-1}+NB_3C^{-2}+N^2B_3C^{-3}+\cdots+N^{k-1}B_3C^{-k}&0\end{bmatrix}P^{-1},
\end{align*}}}

{\scriptsize{\begin{align*}
    \widehat{X}\widehat{A}\widehat{X}&= \left(P\begin{bmatrix}
C^{-1}&0\\0&0
\end{bmatrix}P^{-1}+\epsilon P\begin{bmatrix}
-C^{-1}B_1C^{-1}& C^{-2}B_2+C^{-3}B_2N+\cdots+C^{-(k+1)}N^{k-1}\\B_3C^{-2}+NB_3C^{-3}+\cdots+N^{k-1}B_3C^{-(k+1)}&0
\end{bmatrix} P^{-1}\right)\\
&~~~~~~\left(P\begin{bmatrix}C &0\\
    0 &N\end{bmatrix}P^{-1}+\epsilon P\begin{bmatrix}B_1 &B_2\\B_3 &B_4\end{bmatrix}P^{-1}\right)
   \Bigg(P\begin{bmatrix}
C^{-1}&0\\0&0
\end{bmatrix}P^{-1}+\\
&~~~~~\epsilon P\begin{bmatrix}
-C^{-1}B_1C^{-1}& C^{-2}B_2+C^{-3}B_2N^{1}+\cdots+C^{-(k+1)}N^{k-1}\\B_3C^{-2}+NB_3C^{-3}+\cdots+N^{k-1}B_3C^{-(k+1)}&0
\end{bmatrix}P^{-1}\Bigg)\\
&=\left(P\begin{bmatrix}
C^{-1}&0\\0&0
\end{bmatrix} P^{-1}+\epsilon P\begin{bmatrix}
-C^{-1}B_1C^{-1}& C^{-2}B_2+C^{-3}B_2N+\cdots+C^{-(k+1)}N^{k-1}\\B_3C^{-2}+NB_3C^{-3}+\cdots+N^{k-1}B_3C^{-(k+1)}&0
\end{bmatrix}P^{-1}\right)\\
&~~~~~~P\begin{bmatrix}
I&0\\0&0
\end{bmatrix}P^{-1}+\epsilon P\begin{bmatrix}
0&C^{-1}B_2+C^{-2}B_2N+\cdots+C^{-k}N^{k-1}\\B_3C^{-1}+NB_3C^{-2}+N^2B_3C^{-3}+\cdots+N^{k-1}B_3C^{-k}&0\end{bmatrix}P^{-1}\\
&=P\begin{bmatrix}
C^{-1}&0\\0&0
\end{bmatrix}P^{-1}+\epsilon P\begin{bmatrix}-C^{-1}BC^{-1}&C^{-2}B_2+C^{-3}B_2N+\cdots+C^{-(k+1)}N^{k-1}\\B_3C^{-2}+NB_3C^{-3}+\cdots+N^{k-1}B_3C^{-(k+1)}&0
\end{bmatrix}P^{-1}\\
&=\widehat{X},
\end{align*}}}

 and

{\scriptsize{\begin{align*}
\widehat{A}^k\widehat{X}\widehat{A}&=\Bigg(P\begin{bmatrix}
    C^k&0\\0&0
    \end{bmatrix}P^{-1}+\\
    &~~~~P\begin{bmatrix}C^{k-1}B_1+C^{k-2}B_1C+\cdots+B_1C^{k-1}&C^{k-1}B_2+C^{k-2}B_2N+\cdots+CB_2N^{k-2}+B_2N^{k-1}\\N^{k-1}B_3+N^{k-2}B_3C+\cdots+NB_3C^{k-2}+B_3C^{k-1}&N^{k-1}B_4+N^{k-2}B_4N+\cdots+NB_4N^{k-2}+B_4C^{k-1}
\end{bmatrix}P^{-1}\Bigg)\\&
   ~~~~\left(P\begin{bmatrix}
C^{-1}&0\\0&0
\end{bmatrix}P^{-1}+\epsilon P\begin{bmatrix}
-C^{-1}B_1C^{-1}& C^{-2}B_2+C^{-3}B_2N+\cdots+C^{-(k+1)}N^{k-1}\\B_3C^{-2}+NB_3C^{-3}+\cdots+N^{k-1}B_3C^{-(k+1)}&0
\end{bmatrix}P^{-1}\right)\\
&~~~~\left(P\begin{bmatrix}C &0\\
    0 &N\end{bmatrix}P^{-1}+\epsilon P\begin{bmatrix}B_1 &B_2\\B_3 &B_4\end{bmatrix}P^{-1}\right)\\
&=\Bigg(P\begin{bmatrix}
    C^k&0\\0&0
    \end{bmatrix}P^{-1}+\\
    &~~~~P\begin{bmatrix}C^{k-1}B_1+C^{k-2}B_1C+\cdots+B_1C^{k-1}&C^{k-1}B_2+C^{k-2}B_2N+\cdots+CB_2N^{k-2}+B_2N^{k-1}\\N^{k-1}B_3+N^{k-2}B_3C+\cdots+NB_3C^{k-2}+B_3C^{k-1}&N^{k-1}B_4+N^{k-2}B_4N+\cdots+NB_4N^{k-2}+B_4C^{k-1}
\end{bmatrix}P^{-1}\Bigg)\\
&~~\left(P\begin{bmatrix}
I&0\\0&0
\end{bmatrix}P^{-1}+\epsilon P\begin{bmatrix}
0&C^{-1}B_2+C^{-2}B_2N+\cdots+C^{-k}N^{k-1}\\B_3C^{-1}+NB_3C^{-2}+N^2B_3C^{-3}+\cdots+N^{k-1}B_3C^{-k}&0\end{bmatrix}P^{-1}\right)
\end{align*}}}
{\scriptsize{\begin{align*}
&=P\begin{bmatrix}
    C^k&0\\0&0
    \end{bmatrix}P^{-1}+\\
    &~~~~P\begin{bmatrix}C^{k-1}B_1+C^{k-2}B_1C+\cdots+B_1C^{k-1}&C^{k-1}B_2+C^{k-2}B_2N+\cdots+CB_2N^{k-2}+B_2N^{k-1}\\N^{k-1}B_3+N^{k-2}B_3C+\cdots+NB_3C^{k-2}+B_3C^{k-1}&0
    \end{bmatrix}P^{-1}\\
    &=\widehat{A}^k.
\end{align*}}}
Hence, $\widehat{A}^D=\widehat{X}$. Furthermore,
{\scriptsize{\begin{align}-A^DBA^D&=P\begin{bmatrix}-C^{-1}B_1C^{-1}&0\\0&0\end{bmatrix}P^{-1},\\
(A^D)^{k+1}D(I-AA^D)&=P\begin{bmatrix}0&C^{-2}B_2+C^{-3}B_2N+\cdots+C^{-k}B_2N^{k-2}+C^{-(k+1)}B_2N^{k-1}\\0&0
\end{bmatrix}P^{-1},\\
(I-AA^D)D(A^D)^{(k+1)}&=P\begin{bmatrix}
0& 0\\B_3C^{-2}+NB_3C^{-3}+\cdots+N^{k-1}B_3C^{-(k+1)}&0
\end{bmatrix}P^{-1}.
\end{align}}}

\noindent From (3.7), (3.8) and (3.9), we get $\widehat{X}=A^D+\epsilon[-A^DBA^D+(A^D)^{(k+1)}D(I- AA^D) + (I-AA^D)D(A^D)^{(k+1)}].$\\
\noindent (iii)$\iff$(iv):   
 By Theorem \ref{kp1}, $rank\begin{bmatrix}D &A^k\\A^k &0\end{bmatrix}=2rank(A^k)+rank(I-AA^D)D(I-AA^D)$ but $(I-AA^D)D(I-AA^D)=0$ by (iii). Conversely, if $rank\begin{bmatrix}D &A^k\\A^k &0\end{bmatrix}=2rank(A^k)$, then $rank((I-AA^D)D(I-AA^D))=0$. So, $(I-AA^D)D(I-AA^D)=0$.\\
 (iv)$\iff$(v): The equivalence of (iv) and (v) follows directly from Theorem \ref{kp2}.

\end{proof}
\begin{example}
        Let $\widehat{A}=\begin{bmatrix}
            1&1&0\\0&0&1\\0&0&0
    \end{bmatrix}+\epsilon\begin{bmatrix}
        1&2&0\\2&1&0\\0&0&1
 \end{bmatrix}$ be a  dual matrix of the form $\widehat{A}=A+\epsilon B$ with $Ind(A)=2$. Then, $A^{D}=\begin{bmatrix}
     1&1&1\\0&0&0\\0&0&0
 \end{bmatrix}$ and $R = -A^DBA^D+(A^D)^{(k+1)}D(I- AA^D) + (I-AA^D)D(A^D)^{(k+1)}=\begin{bmatrix}
     -5&-5&-7\\2&2&2\\0&0&0
 \end{bmatrix}.$ So, $\widehat{A}^D=\begin{bmatrix}
     1&1&1\\0&0&0\\0&0&0
 \end{bmatrix}+
 \epsilon\begin{bmatrix}
     -5&-5&-7\\2&2&2\\0&0&0
 \end{bmatrix}$.
\end{example}

The following corollary directly follows from the above theorem.
  \begin{corollary}
Let $\widehat{A} = A + \epsilon B$ be a dual matrix and     $D=(A^{k-1}B+A^{k-2}BA+\cdots+ABA^{k-2}+BA^{k-1})$, where $Ind(\widehat{A})=k$. If $\widehat{A}^D$ exists and $AA^DD=DAA^D=D$, then
$$\widehat{A}^D=A^D-\epsilon A^DDA^D.$$
\end{corollary}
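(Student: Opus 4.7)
The plan is to invoke the representation theorem just proved, which gives $\widehat{A}^D = A^D + \epsilon R$ with
$$R = -A^DBA^D + (A^D)^{k+1}D(I-AA^D) + (I-AA^D)D(A^D)^{k+1},$$
and then simplify $R$ under the hypothesis $AA^DD = DAA^D = D$.

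First I would rewrite the two equalities in the hypothesis as $(I-AA^D)D = 0$ and $D(I-AA^D) = 0$. Substituting these into the formula for $R$ immediately eliminates the second and third summands, so $R$ collapses to $-A^DBA^D$ and at this stage $\widehat{A}^D = A^D - \epsilon A^DBA^D$. This step is essentially bookkeeping, because both vanishing identities are formally equivalent to the two halves of the hypothesis.

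The remaining step, and the main technical obstacle, is to identify $A^DBA^D$ with $A^DDA^D$. Expanding $D = \sum_{j=0}^{k-1} A^{k-1-j}BA^j$ and distributing, this reduces to analyzing the sum $\sum_{j=0}^{k-1} A^DA^{k-1-j}BA^jA^D$. The approach would be to exploit the Drazin identities $A^DA = AA^D$ and $A^DAA^D = A^D$, together with the hypothesis that $AA^D$ acts as a two-sided identity on $D$, to rewrite each summand $A^DA^{k-1-j}BA^jA^D$ in a canonical form in terms of $A^DBA^D$. I expect this reshaping to be the delicate part of the argument, since it is the only place where the structural assumption genuinely enters after the initial simplification, and it is the step most sensitive to how the powers of $A$ interact with $A^D$ at indices below $k$. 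Once the algebraic reduction is completed, the formula $\widehat{A}^D = A^D - \epsilon A^DDA^D$ follows directly from the simplified expression for $R$.
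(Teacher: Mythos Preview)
Your first reduction is exactly what the paper intends: the corollary is stated as following ``directly'' from the representation theorem, and indeed the hypothesis $AA^DD=DAA^D=D$ is equivalent to $(I-AA^D)D=D(I-AA^D)=0$, which annihilates the last two summands in the formula for $R$ and leaves $R=-A^DBA^D$, hence $\widehat{A}^D=A^D-\epsilon A^DBA^D$.

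The ``remaining step'' you flag, however, cannot be carried out: the identity $A^DBA^D=A^DDA^D$ is false in general, even under the stated hypothesis. In the block decomposition $A=P\,\mathrm{diag}(C,N)\,P^{-1}$ the hypothesis constrains only the off-diagonal and $(2,2)$ blocks of $D$, whereas the $(1,1)$ blocks of $A^DBA^D$ and $A^DDA^D$ are $C^{-1}B_1C^{-1}$ and $\sum_{j=0}^{k-1}C^{k-2-j}B_1C^{j-1}$ respectively, and these disagree whenever $k\ge 2$ and $B_1$ fails to commute with $C$. Concretely, for $k=2$ take $C=\mathrm{diag}(2,1)$, $N=\left[\begin{smallmatrix}0&1\\0&0\end{smallmatrix}\right]$, $B_1=\left[\begin{smallmatrix}0&1\\0&0\end{smallmatrix}\right]$ and $B_2=B_3=B_4=0$: then $\widehat{A}^D$ exists and $AA^DD=DAA^D=D$, but $C^{-1}B_1C^{-1}=\left[\begin{smallmatrix}0&1/2\\0&0\end{smallmatrix}\right]$ while $B_1C^{-1}+C^{-1}B_1=\left[\begin{smallmatrix}0&3/2\\0&0\end{smallmatrix}\right]$. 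So your instinct that this was the delicate point was right, but the obstruction is fatal rather than technical: the printed corollary contains a typographical slip, and the intended conclusion---consistent with how the result is actually applied in the order-law proofs of Section~\ref{rm4}---is $\widehat{A}^D=A^D-\epsilon A^DBA^D$, which is precisely what your argument already establishes.
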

Next example demonstrates  the above corollary.
\begin{example}
Let $\widehat{A}=A+\epsilon B=\begin{bmatrix}
-1&-1&0\\1&1&0\\0&0&0
\end{bmatrix}+\epsilon \begin{bmatrix}
0&0&0\\0&0&0\\0&0&1
\end{bmatrix}$. It is clear that $k=2$ and $D=AB+BA=0$. So, $AA^DD=DAA^D=D$. Then, $A^DDA^D=0$ and $\widehat{A}^D=A^D+\epsilon 0=A^D$.
\end{example}
In 2014, Malik and Thome \cite{malika} proposed a new generalized inverse  called the DMP inverse for square matrices. Motivated by this work, we define a new generalized inverse for dual matrices that call dual  Drazin Moore-Penrose generalized inverse (DDMPGI). 
\begin{definition} If the DDGI and DMPGI of dual matrix $\widehat{A}$ exists, then
the dual  Drazin Moore-Penrose generalized inverse (DDMPGI) of dual matrices $\widehat{A}$ is
\begin{align*}
    \widehat{A}^{D,\dagger}&=\widehat{A}^D\widehat{A}\widehat{A}^{\dagger}\\
    &= [A^D+\epsilon (-A^DDA^D+(A^D)^2D(I- AA^D) + (I-AA^D)D(A^D)^2)][A+\\
    &~~~~~\epsilon B][A^{\dagger}- \epsilon (A^{\dagger}BA^{\dagger}-(A^TA)^{\dagger}B^T(I_m-AA^{\dagger})-(I_n-A^{\dagger}A)B^T(AA^T)^{\dagger})]\\
    &=[A^D+\epsilon (-A^DDA^D+(A^D)^2D(I- AA^D) + (I-AA^D)D(A^D)^2)][AA^{\dagger}+\epsilon(BA^{\dagger}\\
    &~~~~~-AA^{\dagger}BA^{\dagger}+A(A^TA)^{\dagger}B^T(I_m-AA^{\dagger}
    ))]\\
    &=A^DAA^{\dagger}+\epsilon [A^D(BA^{\dagger}-AA^{\dagger}BA^{\dagger}+A(A^TA)^{\dagger}B^T(I_m-AA^{\dagger}))+(-A^DDA^D+(A^D)^2\\&~~~~~D(I- AA^D) + (I-AA^D)D(A^D)^2)AA^{\dagger}].
\end{align*}
\end{definition}
\begin{remark}
If $\widehat{A}^{\dagger}=\widehat{A}^{D}$, then $ \widehat{A}^{D,\dagger}=\widehat{A}^D$.
\end{remark}

\section{DDGI solution of the dual linear equation $\widehat{A}\widehat{x} =\widehat{b}$}\label{rm3}
In this section, we deal with finding a solution of a dual linear system  
\begin{equation}\label{eqn4.1}
    \widehat{A}\widehat{x} =\widehat{b},
\end{equation}
via the DDGI.
\begin{theorem}
If the DDGI of $\widehat{A}$ exists with $Ind(\widehat{A})=k$, then equation \eqref{eqn4.1} is consistent iff $\widehat{A}\widehat{A}^D\widehat{b}=\widehat{b}$. Furthermore, the general solution of \eqref{eqn4.1} is
\begin{equation*}
    \widehat{x}=\widehat{A}^D\widehat{b}+(\widehat{A}^{k-1}-\widehat{A}^D\widehat{A}^k)\widehat{z},
\end{equation*}
where $\widehat{z}\in \mathbb{D}^n$  and $\widehat{\mathbb{D}}$ denotes the set of dual vectors.
\end{theorem}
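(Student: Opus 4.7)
The plan is to handle the theorem in two stages: first the characterization of consistency, then the parametrization of the full solution set. Throughout, I will repeatedly exploit the three defining identities of the DDGI, namely $\widehat{A}^k\widehat{A}^D\widehat{A}=\widehat{A}^k$, $\widehat{A}^D\widehat{A}\widehat{A}^D=\widehat{A}^D$, and $\widehat{A}\widehat{A}^D=\widehat{A}^D\widehat{A}$, together with the commutativity-derived identity $\widehat{A}\widehat{A}^D\widehat{A}^k=\widehat{A}^k$ (obtained by sliding $\widehat{A}\widehat{A}^D$ past powers of $\widehat{A}$ and invoking the first defining relation).

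For the consistency statement, sufficiency is immediate: if $\widehat{A}\widehat{A}^D\widehat{b}=\widehat{b}$, then taking $\widehat{x}=\widehat{A}^D\widehat{b}$ yields $\widehat{A}\widehat{x}=\widehat{A}\widehat{A}^D\widehat{b}=\widehat{b}$, so the system is consistent. For necessity, I would start from a hypothetical solution $\widehat{A}\widehat{x}=\widehat{b}$, premultiply by $\widehat{A}^{k-1}$ to obtain $\widehat{A}^k\widehat{x}=\widehat{A}^{k-1}\widehat{b}$, and then apply $\widehat{A}\widehat{A}^D$ on the left. Since $\widehat{A}\widehat{A}^D$ commutes with every power of $\widehat{A}$ and acts as the identity on $\widehat{A}^k$, this will transfer the identity to $\widehat{b}$ via $\widehat{A}\widehat{A}^D\widehat{b}=\widehat{b}$, after using the defining equation once more.

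For the general solution formula, I would first verify that every $\widehat{x}=\widehat{A}^D\widehat{b}+(\widehat{A}^{k-1}-\widehat{A}^D\widehat{A}^k)\widehat{z}$ is indeed a solution. Direct computation gives
\[
\widehat{A}\widehat{x}=\widehat{A}\widehat{A}^D\widehat{b}+\bigl(\widehat{A}^k-\widehat{A}\widehat{A}^D\widehat{A}^k\bigr)\widehat{z}=\widehat{b}+(\widehat{A}^k-\widehat{A}^k)\widehat{z}=\widehat{b},
\]
where the first summand uses the established consistency condition and the second summand vanishes by $\widehat{A}\widehat{A}^D\widehat{A}^k=\widehat{A}^k$. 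For the converse, suppose $\widehat{A}\widehat{y}=\widehat{b}$; then $\widehat{A}(\widehat{y}-\widehat{A}^D\widehat{b})=0$, so the task reduces to producing $\widehat{z}$ with $(\widehat{A}^{k-1}-\widehat{A}^D\widehat{A}^k)\widehat{z}=\widehat{y}-\widehat{A}^D\widehat{b}$. Here I would invoke the canonical block decomposition of $\widehat{A}$ from the previous theorem, write $\widehat{y}-\widehat{A}^D\widehat{b}$ in the conformably partitioned coordinates, use that the first block necessarily vanishes (since $\widehat{A}\widehat{A}^D(\widehat{y}-\widehat{A}^D\widehat{b})=0$), and then construct $\widehat{z}$ block by block on the nilpotent piece.

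The main obstacle will be this last step: verifying that every null-space element of $\widehat{A}$ lies in the range of $\widehat{A}^{k-1}(I-\widehat{A}\widehat{A}^D)$. Unlike the index-one case, where $(I-\widehat{A}\widehat{A}^D)$ already parametrizes the kernel, here the factor $\widehat{A}^{k-1}$ interacts nontrivially with the nilpotent block, and one must carefully track which vectors in the nilpotent subspace are picked up. The dual structure adds a further layer: the requisite $\widehat{z}=z_0+\epsilon z_1$ must be built so that both the real and dual parts satisfy the correct equations simultaneously, which is where the representation of $\widehat{A}^D$ from the previous theorem, together with the identity $(I-\widehat{A}\widehat{A}^D)D(I-\widehat{A}\widehat{A}^D)=0$, should make the construction go through.
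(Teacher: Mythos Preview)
The paper states this theorem without proof, so there is no argument to compare against; I can only evaluate your proposal on its own terms.

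Your treatment of the two forward directions is fine: if $\widehat{A}\widehat{A}^D\widehat{b}=\widehat{b}$ then $\widehat{x}=\widehat{A}^D\widehat{b}$ solves the system, and every vector $\widehat{A}^D\widehat{b}+(\widehat{A}^{k-1}-\widehat{A}^D\widehat{A}^k)\widehat{z}$ is a solution because $\widehat{A}\widehat{A}^D\widehat{A}^k=\widehat{A}^k$. The difficulty is with the two converse directions, and here the problem is not that your argument is merely incomplete but that the claims themselves fail when $k>1$.

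For the necessity of $\widehat{A}\widehat{A}^D\widehat{b}=\widehat{b}$: carrying out your sketch, premultiplying $\widehat{A}\widehat{x}=\widehat{b}$ by $\widehat{A}^{k-1}$ and then by $\widehat{A}\widehat{A}^D$ gives on the left $\widehat{A}\widehat{A}^D\widehat{A}^k\widehat{x}=\widehat{A}^k\widehat{x}=\widehat{A}^{k-1}\widehat{b}$, and on the right $\widehat{A}\widehat{A}^D\widehat{A}^{k-1}\widehat{b}=\widehat{A}^k\widehat{A}^D\widehat{b}$ (slide $\widehat{A}\widehat{A}^D$ past the powers). So you only recover $\widehat{A}^{k-1}\widehat{b}=\widehat{A}^k\widehat{A}^D\widehat{b}$, not $\widehat{b}=\widehat{A}\widehat{A}^D\widehat{b}$. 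The stronger statement is in fact false: taking $B=0$ and $A=\bigl(\begin{smallmatrix}0&1\\0&0\end{smallmatrix}\bigr)$ (so $k=2$ and $\widehat{A}^D=A^D=0$, the existence condition being trivially satisfied), the system $\widehat{A}\widehat{x}=(1,0)^T$ is consistent yet $\widehat{A}\widehat{A}^D(1,0)^T=0$.

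The same obstruction blocks the claim that every solution has the displayed form. Writing $\widehat{A}^{k-1}-\widehat{A}^D\widehat{A}^k=\widehat{A}^{k-1}(I-\widehat{A}^D\widehat{A})$ and passing to the canonical block decomposition of the previous theorem, the range of this homogeneous term on the nilpotent block is $R(N^{k-1})$, whereas the null space of $\widehat{A}$ on that block is $N(N)$. One always has $R(N^{k-1})\subseteq N(N)$ since $N^k=0$, but the inclusion is strict whenever $N$ has a Jordan block of size less than $k$; for instance $N=\mathrm{diag}\bigl(\bigl(\begin{smallmatrix}0&1\\0&0\end{smallmatrix}\bigr),0\bigr)$ gives $\dim R(N)=1<2=\dim N(N)$. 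Your instinct that this step is ``the main obstacle'' is exactly right, and no bookkeeping with the dual part can close the gap. The statement becomes correct only under an added hypothesis such as $\widehat{b}\in R(\widehat{A}^{k-1})$, or by restricting to solutions in $R(\widehat{A}^k)$ as the paper does in its next theorem.
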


First, we define the range and null space of a dual matrix. We have $\widehat{A}=A+\epsilon B$, then
$$\widehat{A}^k=A^k+\epsilon(A^{k-1}B+A^{k-2}BA+\cdots+ABA^{k-2}+BA^{k-1}).$$
Now,
\begin{align*}
    R(\widehat{A}^k)&=\{\widehat{w}\in \mathbb{D}^n: ~\widehat{w}=\widehat{A}^k\widehat{z},~ \widehat{z}\in \mathbb{D}^n\}\\
    &=\{A^kx+\epsilon(A^ky+(A^{k-1}B+A^{k-2}BA+\cdots+ABA^{k-2}+BA^{k-1})x): ~ x,y\in \mathbb{R}^n\},
\end{align*}
and 
\begin{align*}
    N(\widehat{A}^k)&=\{\widehat{w}\in \mathbb{D}^n:~ 0=\widehat{A}^k\widehat{z}, ~\widehat{z}\in \mathbb{D}^n\}\\
    &=\{x+\epsilon y :~A^kx=0,A^ky+(A^{k-1}B+A^{k-2}BA+\cdots+ABA^{k-2}+BA^{k-1})x=0,  x,y\in \mathbb{R}^n\}.
\end{align*}
Further, if the DDGI exists, then $R(\widehat{A})=R(\widehat{A}^D)$ (it is easy to see by DDGI properties). The Drazin inverse $A^D$ of a matrix $A$ satisfies the following properties:
$AA^D=A^DA,~A^DAA^D=A^D,~ A^{k+1}A^D=A^k$. So, we have $A(A^D)^2=A^D$. Then, 
\begin{align*}
    AA^D&=A(A(A^D)^2)
    =A^2(A^D)^2\\
    &=A^2(A(A^D)^2)A^D
    =A^3(A^D)^3\\
    &=A^k(A^D)^k
    =(A^D)^kA^k.
\end{align*}
Using the discussion made before, we now present the primary result of this section.
\begin{theorem}
If the DDGI of a dual matrix $\widehat{A}=A+\epsilon B$ exists with $Ind(\widehat{A})=k$,
then $\widehat{A}^Db$ is the unique solution of
\begin{equation}\label{eq4.1}
    \widehat{A}\widehat{x}=\widehat{b},~ \widehat{x}\in R(\widehat{A}^k).
\end{equation}
\end{theorem}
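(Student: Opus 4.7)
The plan is to decompose the claim into three pieces and verify each directly from the three defining identities of the DDGI, namely (P1) $\widehat{A}^k\widehat{A}^D\widehat{A}=\widehat{A}^k$, (P2) $\widehat{A}^D\widehat{A}\widehat{A}^D=\widehat{A}^D$, and (P3) $\widehat{A}\widehat{A}^D=\widehat{A}^D\widehat{A}$. The sub-claims are (a) $\widehat{A}^D\widehat{b}\in R(\widehat{A}^k)$; (b) $\widehat{A}(\widehat{A}^D\widehat{b})=\widehat{b}$ (the tacit consistency assumption in the statement, equivalent to consistency of the system by the preceding theorem of this section); and (c) any $\widehat{x}\in R(\widehat{A}^k)$ with $\widehat{A}\widehat{x}=\widehat{b}$ must equal $\widehat{A}^D\widehat{b}$. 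Everything that follows is a polynomial manipulation in $\widehat{A}$ and $\widehat{A}^D$ over the dual ring, so it transfers from the real Drazin setting with no extra $\epsilon$-bookkeeping; in particular, no appeal to the $2\times 2$ block decomposition from Section~\ref{rm2} is required.

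For (a), I would telescope (P2) using (P3). Starting from $\widehat{A}^D=\widehat{A}^D\widehat{A}\widehat{A}^D=\widehat{A}\widehat{A}^D\cdot\widehat{A}^D=\widehat{A}(\widehat{A}^D)^2$, iterating the same move yields $\widehat{A}^D=\widehat{A}^m(\widehat{A}^D)^{m+1}$ for every $m\ge0$; taking $m=k$ gives $\widehat{A}^D\widehat{b}=\widehat{A}^k[(\widehat{A}^D)^{k+1}\widehat{b}]$, which is visibly of the form $\widehat{A}^k\widehat{z}$. For (b), consistency is either the implicit hypothesis (otherwise ``unique solution'' is empty phrasing) or, equivalently, the condition $\widehat{A}\widehat{A}^D\widehat{b}=\widehat{b}$ supplied by the preceding theorem, so $\widehat{A}(\widehat{A}^D\widehat{b})=\widehat{b}$ comes for free.

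The heart of the matter is (c), and I would discharge it with a single calculation that also re-derives (b). Given $\widehat{x}=\widehat{A}^k\widehat{z}\in R(\widehat{A}^k)$ satisfying $\widehat{A}\widehat{x}=\widehat{b}$, pre-multiply by $\widehat{A}^D$:
\[
\widehat{A}^D\widehat{b} \;=\; \widehat{A}^D\widehat{A}\,\widehat{x} \;=\; \widehat{A}^D\widehat{A}\,\widehat{A}^k\widehat{z} \;=\; \widehat{A}^D\widehat{A}^{k+1}\widehat{z} \;=\; \widehat{A}^k\widehat{z} \;=\; \widehat{x},
\]
where the penultimate equality uses $\widehat{A}^D\widehat{A}^{k+1}=\widehat{A}^k$, itself an immediate consequence of (P1) combined with (P3) (write $\widehat{A}^k=\widehat{A}^k\widehat{A}^D\widehat{A}=\widehat{A}^{k+1}\widehat{A}^D=\widehat{A}^D\widehat{A}^{k+1}$). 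This pins $\widehat{x}$ to the single candidate $\widehat{A}^D\widehat{b}$ and simultaneously confirms that $\widehat{A}^D\widehat{b}$ itself solves the equation when $R(\widehat{A}^k)$ is nonempty under $\widehat{A}\widehat{x}=\widehat{b}$.

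The main obstacle I anticipate is administrative rather than algebraic: one must resist the temptation to re-open the block decomposition from Section~\ref{rm2}, since (P1)--(P3) already encode everything that is needed. Once those three identities are accepted as governing the DDGI, the index-$k$ nature of $\widehat{A}$ enters only through the single telescoping step in (a), and the remainder is a short formal computation.
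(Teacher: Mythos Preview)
Your proposal is correct but follows a genuinely different route from the paper. The paper establishes uniqueness by proving the intersection fact $R(\widehat{A}^k)\cap N(\widehat{A}^k)=\{0\}$ through an explicit real/dual decomposition: an element $\widehat{l}$ of the intersection is written as $A^km+\epsilon(A^kn+Dm)$ with $D=A^{k-1}B+\cdots+BA^{k-1}$, and then the real fact $R(A^k)\cap N(A^k)=\{0\}$ is applied twice, the second time after invoking the structural equation \eqref{eqn3.1} from the existence lemma to show that the dual part also lies in $R(A^k)$. You bypass this entirely: from (P1) and (P3) you extract the single identity $\widehat{A}^D\widehat{A}^{k+1}=\widehat{A}^k$, which says that $\widehat{A}^D\widehat{A}$ acts as the identity on $R(\widehat{A}^k)$, so premultiplying $\widehat{A}\widehat{x}=\widehat{b}$ by $\widehat{A}^D$ recovers $\widehat{x}$ in one line. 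Your approach is shorter and stays purely at the axiomatic level of the three DDGI equations, with no $\epsilon$-bookkeeping and no appeal to \eqref{eqn3.1}; the paper's approach, while longer, has the side benefit of isolating $R(\widehat{A}^k)\cap N(\widehat{A}^k)=\{0\}$ as a standalone structural fact about dual matrices admitting a DDGI.
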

\begin{proof}
Firstly, if $\widehat{A}^D$ exists, then \eqref{eq4.1} is consistent and $\widehat{A}^Db$ is a solution.\\
We know that $R(\widehat{A}^D)=R(\widehat{A}^k)$. And $\widehat{A}^Db\in R(\widehat{A}^D)=R(\widehat{A}^k)$. Let $\widehat{z}$ be the another solution of this system. So, $\widehat{z}\in R(\widehat{A}^D)=R(\widehat{A}^k)$. Hence, $\widehat{z}-\widehat{A}^Db\in R(\widehat{A}^D)$. And $\widehat{A}^{}(\widehat{z}-\widehat{A}^Db)=0$, i.e., $\widehat{A}^k(\widehat{z}-\widehat{A}^Db)=0$ implies $(\widehat{z}-\widehat{A}^Db)\in N(\widehat{A}^k).$ So, $(\widehat{z}-\widehat{A}^Db)\in N(\widehat{A}^k)\cap R(\widehat{A}^k)$. \\
We know that if $A^D$ exists, then $R(A^k)\cap N(A^k)=\{0\}$.  We have to prove $N(\widehat{A}^k)\cap R(\widehat{A}^k)=\{0\}$. Hence, $\widehat{z}-\widehat{A}^Db=0$, i.e., $\widehat{z}=\widehat{A}^Db.$ So, $\widehat{A}^Db$ is unique.\\
Let $\widehat{l}\in R(\widehat{A}^k)\cap N(\widehat{A}^k)$, there exist two vectors $m$ and $n$ such that $\widehat{l}=A^km+\epsilon(A^kn+(A^{k-1}B+A^{k-2}BA+\cdots+ABA^{k-2}+BA^{k-1})m)$. We know that $\widehat{A}^k\widehat{l}=0$, i.e.,
\begin{align*}
 [A^k+\epsilon(A^{k-1}B+A^{k-2}BA+\cdots+ABA^{k-2}+BA^{k-1})] [A^km+\epsilon(A^kn+(A^{k-1}B+A^{k-2}BA\\
   ~~+\cdots+ABA^{k-2}+BA^{k-1})m)]=0,
   \end{align*}
\text {i.e.,}
\begin{align*} A^{2k}m+\epsilon(A^{2k}n+A^k(A^{k-1}B+A^{k-2}BA
   ~~+\cdots+ABA^{k-2}+BA^{k-1})m+(A^{k-1}B+A^{k-2}BA\\
   +\cdots+ABA^{k-2}+BA^{k-1})A^km)=0.
\end{align*}
Hence, $A^{2k}m=0$ and $A^{2k}n+A^k(A^{k-1}B+A^{k-2}BA
   ~~+\cdots+ABA^{k-2}+BA^{k-1})m+(A^{k-1}B+A^{k-2}BA
   +\cdots+ABA^{k-2}+BA^{k-1})A^km=0$.\\
So, $A^k(A^km)=0$. Thus, $A^km\in R(A^k)\cap N(A^k)=\{0\}$. Further, we obtain  $A^{2k}n+A^k(A^{k-1}B+A^{k-2}BA+\cdots+ABA^{k-2}+BA^{k-1})m=0$, i.e., $A^k(A^{k}n+(A^{k-1}B+A^{k-2}BA+\cdots+ABA^{k-2}+BA^{k-1})m)=0$, which implies $A^{k}n+(A^{k-1}B+A^{k-2}BA+\cdots+ABA^{k-2}+BA^{k-1})m\in N(A^k)$. We have $\widehat{A}^D$ is exists, then by \eqref{eqn3.1},
\begin{align}\label{eq4.4}
    A^{k-1}B+A^{k-2}BA+&\cdots+ABA^{k-2}+BA^{k-1}\nonumber\\
    &=A^kA^DB+A^kRA+(A^{k-1}B+A^{k-2}BA+\cdots+ABA^{k-2}+BA^{k-1})A^DA.
\end{align}
Using equation \eqref{eq4.4}, we get 
\begin{align*}
    (A^{k}n+&(A^{k-1}B+A^{k-2}BA+\cdots+ABA^{k-2}+BA^{k-1})m)\\
    &= A^kn+ [A^kA^DB+A^kRA+(A^{k-1}B+A^{k-2}BA+\cdots+ABA^{k-2}+BA^{k-1})(A^DA)]m\\
    &=A^kn+ [A^kA^DB+A^kRA+(A^{k-1}B+A^{k-2}BA+\cdots+ABA^{k-2}+BA^{k-1})(A^D)^k(A^k)]m\\
     &=A^kn+ A^kA^DBm+A^kRAm\\
     &=A^k(In+A^DBm+RAm),
\end{align*}
which implies that     $A^{k}n+(A^{k-1}B+A^{k-2}BA+\cdots+ABA^{k-2}+BA^{k-1})m\in R(A^k)$. Hence, $    A^{k}n+(A^{k-1}B+A^{k-2}BA+\cdots+ABA^{k-2}+BA^{k-1})m\in R(A^k)\cap N(A^k)=\{0\}$. Therefore, $l=0$, i.e.,   $R(\widehat{A}^k)\cap N(\widehat{A}^k)=0 $, i.e., $\widehat{z}-\widehat{A}^Db=0$, i.e., $\widehat{z}=\widehat{A}^Db.$ So, $\widehat{A}^Db$ is unique.\\
\end{proof}
\section{Reverse and forward order laws for particular form}\label{rm4}
In this section, we present the reverse and the forward-order laws for  DMPGI, DGGI, DDGI and DGCI for particular form $\widehat{A}^c=A^c-\epsilon A^cAA^c$, where $A^c$ is the notation used for MP inverse, group inverse, Drazin inverse, and core inverse. We start this section with an example which show that reverse and forward-order laws do not always hold for the DGGI.
\begin{example}

Let $\widehat{A}=\begin{bmatrix}
2&1&3\\0&0&0\\1&1&2
\end{bmatrix}+\epsilon \begin{bmatrix}
2&2&4\\3&-1&2\\-4&-2&-6
\end{bmatrix}$ and $\widehat{B}=\begin{bmatrix}
1&-1&0\\0&0&0\\-1&3&2
\end{bmatrix}+\epsilon\begin{bmatrix}
2&-4&3\\0&0&0\\1&-5&6
\end{bmatrix}$. 
Then, we get  $\widehat{A}^{\#}=\begin{bmatrix}2&-5&-3\\0&0&0\\-1&3&2
\end{bmatrix}+\epsilon \begin{bmatrix}
27&-78&-51\\13&-35&-22\\-21&60&39
\end{bmatrix}$, $\widehat{B}^{\#}=\begin{bmatrix}
1&-1&0\\0&0&0\\-1/3&1/9&1/3
\end{bmatrix}+\epsilon \begin{bmatrix}
1&-1.6667&1\\0&0&0\\-.6667&.4444&.3333
\end{bmatrix}$ and 
$\widehat{A}\widehat{B}=\begin{bmatrix}
5&-11&9\\0&0&0\\3&-7&6
\end{bmatrix}+\epsilon \begin{bmatrix}
13& -37&36\\5&-9&6\\-6&8&-3
\end{bmatrix}$. Therefore, 
$(\widehat{A}\widehat{B})^{\#}=\begin{bmatrix}
2&0&-3\\0&0&0\\-1&-1/9&5/3
\end{bmatrix}+\epsilon \begin{bmatrix}
6&86/9&-70/3\\13&5/9&-61/3\\
127/9&-115/27&-133/9
\end{bmatrix}$,\\
$\widehat{A}^{\#}\widehat{B}^{\#}=\begin{bmatrix}
2.9999&-2.3333&-0.9999\\
0&0&0\\
.0001&.6666&.9996
\end{bmatrix}+\epsilon \begin{bmatrix}
47.9984&-37.3327&-15.9982\\20.3326&-15.4442&-7.3326\\-34.9988&24.9994&14.9986
\end{bmatrix}$ and
$\widehat{B}^{\#}\widehat{A}^{\#}=\begin{bmatrix}
2&-5&-3\\0&0&0\\-.3333&.6666&1.9998
\end{bmatrix}+\epsilon \begin{bmatrix}
17&-51&-29\\0&0&0\\-15.5542&44.4485&30.5528
\end{bmatrix}$. It is clear that $(\widehat{A}\widehat{B})^{\#}\neq \widehat{A}^{\#}\widehat{B}^{\#}\neq\widehat{B}^{\#}\widehat{A}^{\#}.$
\end{example}
Similarly, the reverse-order and forward-order laws do not hold for DMPGI, DCGI, or DDGI. We will start with the result that provides sufficient conditions for reverse and forward-order laws.
\begin{theorem}
Let $\widehat{A}=A+\epsilon B$ and $\widehat{C}=C+\epsilon D$ be such that the DGGI  of $\widehat{A}$, $\widehat{C}$, $\widehat{A}\widehat{C}$ exist. If $AC=CA$, $C^{\#}B=BC^{\#}$ and  $A^{\#}D=DA^{\#}$, then $(\widehat{A}\widehat{C})^{\#}=\widehat{A}^{\#}\widehat{C}^{\#}=\widehat{C}^{\#}\widehat{A}^{\#}$.
\end{theorem}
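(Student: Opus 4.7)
The plan is to verify directly that $\widehat{A}^{\#}\widehat{C}^{\#}$ coincides with the closed-form expression for $(\widehat{A}\widehat{C})^{\#}$ furnished by Theorem \ref{thm2}, by exploiting the commutativity that the three hypotheses force. First I would enrich the hypothesis list: because both $A$ and $C$ are group invertible and $(X^{\#})^{\#} = X$, the relation $C^{\#}B = BC^{\#}$ upgrades to $CB = BC$ and $A^{\#}D = DA^{\#}$ upgrades to $AD = DA$ (apply the basic fact ``if $XY = YX$ and $X^{\#}$ exists then $X^{\#}Y = YX^{\#}$'' with $X = C^{\#}$ or $A^{\#}$). Combining these with $AC = CA$ one obtains the list: $A$ and $A^{\#}$ each commute with $C$, $C^{\#}$ and $D$, while $C$ and $C^{\#}$ each commute with $A$, $A^{\#}$ and $B$. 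Crucially, the hypotheses do \emph{not} give $BD = DB$, nor commutations of $B$ with $A, A^{\#}$ or of $D$ with $C, C^{\#}$; that asymmetry is what drives the whole calculation.

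Next I would record the immediate consequences $(AC)^{\#} = A^{\#}C^{\#} = C^{\#}A^{\#}$ and $\widehat{A}\widehat{C} = \widehat{C}\widehat{A}$ (the latter because $AD + BC = DA + CB$). Applying Theorem \ref{thm2} to $\widehat{A}\widehat{C} = AC + \epsilon(AD+BC)$ yields
\begin{equation*}
(\widehat{A}\widehat{C})^{\#} = A^{\#}C^{\#} + \epsilon R^{*},
\end{equation*}
where, with $E = AD+BC$ and $P = A^{\#}C^{\#}$,
\begin{equation*}
R^{*} = -PEP + P^{2}E(I - ACP) + (I - ACP)EP^{2}.
\end{equation*}
Direct multiplication gives $\widehat{A}^{\#}\widehat{C}^{\#} = A^{\#}C^{\#} + \epsilon(A^{\#}R_{C} + R_{A}C^{\#})$, where $R_{A}, R_{C}$ are the dual-part expressions supplied by Theorem \ref{thm2} for $\widehat{A}^{\#}, \widehat{C}^{\#}$. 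The remaining task is to prove $R^{*} = A^{\#}R_{C} + R_{A}C^{\#}$.

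I would split $E = AD + BC$, producing six summands in $R^{*}$, and simplify each using the commutation list together with the elementary identities $(A^{\#})^{2}A = A(A^{\#})^{2} = A^{\#}$ and $(C^{\#})^{2}C = C(C^{\#})^{2} = C^{\#}$. The three $AD$-summands collapse to $A^{\#}$ multiplied by, respectively, the first, second and third summands of $R_{C}$; the three $BC$-summands collapse to the first, second and third summands of $R_{A}$ multiplied on the right by $C^{\#}$. The pattern is forced by which moves are forbidden: $D$ cannot cross $C$ or $C^{\#}$, so $A^{\#}$ must stay on its left; $B$ cannot cross $A$ or $A^{\#}$, so $C^{\#}$ must stay on its right. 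Adding the six simplified pieces yields $R^{*} = A^{\#}R_{C} + R_{A}C^{\#}$, which proves $(\widehat{A}\widehat{C})^{\#} = \widehat{A}^{\#}\widehat{C}^{\#}$. Finally, every factor appearing in $R_{C}$ commutes with $A^{\#}$ (so $A^{\#}R_{C} = R_{C}A^{\#}$) and every factor in $R_{A}$ commutes with $C^{\#}$ (so $R_{A}C^{\#} = C^{\#}R_{A}$), giving $\widehat{A}^{\#}\widehat{C}^{\#} = \widehat{C}^{\#}\widehat{A}^{\#}$; alternatively, the hypotheses are symmetric under $A \leftrightarrow C$, $B \leftrightarrow D$, so the same argument delivers the reverse-order identity.

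The main obstacle is the bookkeeping in the six-term comparison: the unusual ``cross'' commutativity pattern ($B$ with $C$-things and $D$ with $A$-things) is exactly what makes the identification possible, but it also makes mistaken commutations tempting. Each intermediate simplification has to be checked against the explicit list of available commutations, rather than against the false intuition that ``everything commutes.''
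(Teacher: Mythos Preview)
Your argument is correct, and it actually establishes more than the paper's own proof does. The paper places this theorem in a section explicitly devoted to the ``particular form'' $\widehat{X}^{\#}=X^{\#}-\epsilon\,X^{\#}X_{0}X^{\#}$ (announced in the section's opening sentence), and the proof simply \emph{declares} $\widehat{A}^{\#}=A^{\#}-\epsilon A^{\#}BA^{\#}$, $\widehat{C}^{\#}=C^{\#}-\epsilon C^{\#}DC^{\#}$, $(\widehat{A}\widehat{C})^{\#}=(AC)^{\#}-\epsilon(AC)^{\#}(AD+BC)(AC)^{\#}$; with only one term in each dual part the verification is a handful of lines using $AC=CA$, $C^{\#}B=BC^{\#}$, $A^{\#}D=DA^{\#}$ directly. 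You instead take the theorem at face value, feed $\widehat{A}$, $\widehat{C}$, $\widehat{A}\widehat{C}$ through the full three-term formula of Theorem~\ref{thm2}, and match the resulting six summands of $R^{*}$ against $A^{\#}R_{C}+R_{A}C^{\#}$. That is a genuinely different (and longer) computation, but the payoff is that your version needs no ``particular form'' hypothesis: the stated commutation assumptions alone suffice. Your preliminary step---upgrading $C^{\#}B=BC^{\#}$ to $CB=BC$ via $(C^{\#})^{\#}=C$, and likewise for $A,D$---is what makes the extra cross-terms collapse, and it is not used (or needed) in the paper's shorter argument.
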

\begin{proof} The DGGI of $\widehat{A}$ and $\widehat{C}$ are 
$\widehat{A}^{\#}=A^{\#}-\epsilon A^{\#}BA^{\#}$ and $\widehat{C}^{\#}=C^{\#}-\epsilon C^{\#}DC^{\#}$, respectively. So,  $\widehat{A}^{\#}\widehat{C}^{\#}=(A^{\#}-\epsilon A^{\#}BA^{\#})(C^{\#}-\epsilon C^{\#}DC^{\#})=A^{\#}C^{\#}-\epsilon (A^{\#}C^{\#}DC^{\#}+A^{\#}BA^{\#}C^{\#}).$ And $\widehat{A}\widehat{C}=(A+\epsilon B)(C+\epsilon D)=AB+\epsilon (AD+BC).$ By Corollary, $(\widehat{A}\widehat{C})^{\#}=(AC)^{\#}-\epsilon ((AC)^{\#}(AD+BC)(AC)^{\#})$. The equality $AC=CA$ implies that  $(AC)^{\#}=A^{\#}C^{\#}=C^{\#}A^{\#}$. Further, $(\widehat{A}\widehat{C})^{\#}=(AC)^{\#}-\epsilon ((AC)^{\#}(AD+BC)(AC)^{\#})=A^{\#}C^{\#}-\epsilon (A^{\#}C^{\#}(AD+BC)A^{\#}C^{\#})=A^{\#}C^{\#}-\epsilon (A^{\#}C^{\#}ADA^{\#}C^{\#}+A^{\#}C^{\#}BCA^{\#}C^{\#})=A^{\#}C^{\#}-\epsilon (C^{\#}A^{\#}AA^{\#}DC^{\#}+A^{\#}BC^{\#}CC^{\#}A^{\#})=A^{\#}C^{\#}-\epsilon (C^{\#}A^{\#}DC^{\#}+A^{\#}BC^{\#}A^{\#})=A^{\#}C^{\#}-\epsilon (A^{\#}C^{\#}DC^{\#}+A^{\#}BC^{\#}A^{\#})=\widehat{A}^{\#}\widehat{C}^{\#}$. 
 Similarly, $(\widehat{A}\widehat{C})^{\#}=\widehat{C}^{\#}\widehat{A}^{\#}$. Hence, $(\widehat{A}\widehat{C})^{\#}=\widehat{A}^{\#}\widehat{C}^{\#}=\widehat{C}^{\#}\widehat{A}^{\#}$.
 \end{proof}
The existence of  reverse and forward-order laws for the DDGI  can be proved similar to the above theorem and is stated below.
\begin{theorem}
Let $\widehat{A}=A+\epsilon B$ and $\widehat{C}=C+\epsilon D\in\mathbb{D}^{n\times n}$ be such that  the DDGI  of $\widehat{A}$, $\widehat{C}$, $\widehat{A}\widehat{C}$ exist. If $AC=CA$, $C^{D}B=BC^{D}$ and  $A^{D}D=DA^{D}$, then $(\widehat{A}\widehat{C})^{D}=\widehat{A}^{D}\widehat{C}^{D}=\widehat{C}^{D}\widehat{A}^{D}$.
\end{theorem}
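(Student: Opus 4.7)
The plan is to imitate the DGGI reverse/forward-order law proof of the preceding theorem, adapting every step to the Drazin setting. Under the ``particular form'' convention of this section, we have $\widehat{A}^D = A^D - \epsilon A^D B A^D$, $\widehat{C}^D = C^D - \epsilon C^D D C^D$, and, since $\widehat{A}\widehat{C} = AC + \epsilon(AD + BC)$, also $(\widehat{A}\widehat{C})^D = (AC)^D - \epsilon (AC)^D(AD+BC)(AC)^D$. The strategy is to expand all three expressions and show they coincide.

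The workhorse fact is that $AC=CA$ unlocks the full package of commutations for the Drazin inverse: $(AC)^D = A^D C^D = C^D A^D$, together with $A^D C = C A^D$, $A C^D = C^D A$, and $A^D C^D = C^D A^D$. These follow from standard Drazin-inverse theory via the block diagonal representation in Theorem \ref{kp3} applied simultaneously to $A$ and $C$. This allows one to rewrite $(AC)^D$ wherever it appears as $A^D C^D$ and to freely reorder factors involving only $A$, $A^D$, $C$, $C^D$.

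The central computation is simplifying the dual part of $(\widehat{A}\widehat{C})^D$:
\begin{equation*}
A^D C^D (AD + BC) A^D C^D = A^D C^D A D A^D C^D + A^D C^D B C A^D C^D.
\end{equation*}
In the first summand, one swaps $C^D A \to A C^D$, then applies the hypothesis $A^D D = D A^D$ to pull $D$ past $A^D$, regroups via $A^D C^D = C^D A^D$, and collapses $A^D A A^D = A^D$, landing on $A^D C^D D C^D$. In the second summand, one uses $C^D B = B C^D$ to move $B$ past $C^D$, then $C^D C C^D = C^D$ and the $A^D$--$C^D$ commutation, landing on $A^D B A^D C^D$. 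Summing, the dual part of $(\widehat{A}\widehat{C})^D$ is $A^D C^D D C^D + A^D B A^D C^D$, which is exactly the dual part obtained by expanding the direct product $\widehat{A}^D \widehat{C}^D$. A symmetric calculation, using the same two hypotheses in the mirrored order, reduces $\widehat{C}^D \widehat{A}^D$ to the same expression, completing the chain of equalities.

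The main obstacle will be bookkeeping: each rewrite uses one of the five commutation identities, and the order of swaps must be chosen carefully, since we have $A^D D = D A^D$ but \emph{not} $C^D D = D C^D$, and $C^D B = B C^D$ but \emph{not} $A^D B = B A^D$. Thus, in each summand we must first bring $D$ (resp.\ $B$) adjacent to the matrix with which it commutes before attempting to pull it past the other matrix using the $AC=CA$-induced commutations. Once a consistent swap sequence is fixed, the remainder is routine algebra and the three expressions coincide.
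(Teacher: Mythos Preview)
Your proposal is correct and follows essentially the same route as the paper, which simply states that the proof ``can be proved similar to the above theorem'' and gives no further details; your expansion of the dual parts and the swap sequence mirrors exactly the manipulations carried out in the DGGI case. One minor remark: your justification of the commutations $A^{D}C=CA^{D}$, $AC^{D}=C^{D}A$, $A^{D}C^{D}=C^{D}A^{D}$ via a ``simultaneous'' block form from Theorem~\ref{kp3} is a bit loose---the cleaner argument is that $A^{D}$ is a polynomial in $A$, hence commutes with anything commuting with $A$---but the facts themselves are standard and the rest of the bookkeeping goes through.
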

Next result establishes the reverse and forward-order laws for the DMPGI.
\begin{theorem}
Let $\widehat{A}=A+\epsilon B$ and $\widehat{C}=C+\epsilon D$ be such that the DMPGI of $\widehat{A}$, $\widehat{C}$, $\widehat{A}\widehat{C}$ exist. If $AC=CA$, $A^*C=CA^*$, $C^{\dagger}B=BC^{\dagger}$ and  $A^{\dagger}D=DA^{\dagger}$, then $(\widehat{A}\widehat{C})^{\dagger}=\widehat{A}^{\dagger}\widehat{C}^{\dagger}=\widehat{C}^{\dagger}\widehat{A}^{\dagger}$.
\end{theorem}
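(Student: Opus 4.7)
The plan is to mirror the proof of the preceding DGGI theorem almost verbatim, replacing the group inverse $\#$ by the Moore--Penrose inverse $\dagger$ and invoking the classical reverse-order law for ordinary MP inverses of commuting matrices whose cross-products are normal. Working in the particular form adopted throughout this section I will take
\[
\widehat{A}^{\dagger} = A^{\dagger} - \epsilon A^{\dagger}BA^{\dagger},\qquad \widehat{C}^{\dagger} = C^{\dagger} - \epsilon C^{\dagger}DC^{\dagger},
\]
and, since $\widehat{A}\widehat{C} = AC + \epsilon(AD+BC)$,
\[
(\widehat{A}\widehat{C})^{\dagger} = (AC)^{\dagger} - \epsilon (AC)^{\dagger}(AD+BC)(AC)^{\dagger}.
\]
Expanding $\widehat{A}^{\dagger}\widehat{C}^{\dagger}$ modulo $\epsilon^{2}$ gives
\[
\widehat{A}^{\dagger}\widehat{C}^{\dagger} = A^{\dagger}C^{\dagger} - \epsilon\bigl(A^{\dagger}C^{\dagger}DC^{\dagger} + A^{\dagger}BA^{\dagger}C^{\dagger}\bigr),
\]
and a symmetric formula for $\widehat{C}^{\dagger}\widehat{A}^{\dagger}$, so the task reduces to matching the real and dual parts of these three expressions.

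The key ingredient is the classical fact that the hypotheses $AC=CA$ and $A^{*}C = CA^{*}$ (whose conjugate transpose yields $C^{*}A = AC^{*}$) force
\[
(AC)^{\dagger} = A^{\dagger}C^{\dagger} = C^{\dagger}A^{\dagger},\qquad A^{\dagger}C = CA^{\dagger},\qquad C^{\dagger}A = AC^{\dagger}.
\]
The reason is that $A^{\dagger}$ lies in the algebra generated by $A$ and $A^{*}$ (for instance via the limit $A^{\dagger} = \lim_{\lambda\to 0^{+}}(A^{*}A+\lambda I)^{-1}A^{*}$), so $A^{\dagger}$ commutes with every matrix that commutes with both $A$ and $A^{*}$; the formula for $(AC)^{\dagger}$ is then the well-known commuting-normal reverse-order law. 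From these relations one also deduces that the projector $C^{\dagger}C$ commutes with $A^{\dagger}$. This already matches the real parts of the three dual matrices above.

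To match the dual parts, I will split $(AC)^{\dagger}(AD+BC)(AC)^{\dagger}$ into the $AD$-piece and the $BC$-piece. Using $A^{\dagger}D = DA^{\dagger}$ together with the commutations above,
\[
A^{\dagger}C^{\dagger}ADA^{\dagger}C^{\dagger} = A^{\dagger}AA^{\dagger}\,C^{\dagger}DC^{\dagger} = A^{\dagger}C^{\dagger}DC^{\dagger},
\]
while using $C^{\dagger}B = BC^{\dagger}$ and commuting $C^{\dagger}C$ past $A^{\dagger}$,
\[
A^{\dagger}C^{\dagger}BCA^{\dagger}C^{\dagger} = A^{\dagger}BC^{\dagger}CA^{\dagger}C^{\dagger} = A^{\dagger}BA^{\dagger}\,C^{\dagger}CC^{\dagger} = A^{\dagger}BA^{\dagger}C^{\dagger}.
\]
These two identities together deliver $(\widehat{A}\widehat{C})^{\dagger} = \widehat{A}^{\dagger}\widehat{C}^{\dagger}$, and the forward-order equality $(\widehat{A}\widehat{C})^{\dagger} = \widehat{C}^{\dagger}\widehat{A}^{\dagger}$ follows either by the symmetric computation or simply by reordering the dual term via $A^{\dagger}C^{\dagger} = C^{\dagger}A^{\dagger}$.

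The main obstacle, and essentially the only non-mechanical point, is the classical fact cited in the second paragraph: that $A^{\dagger}$ commutes with $C$ (and hence with $C^{\dagger}$ and with $C^{\dagger}C$) under the sole hypotheses $AC=CA$ and $A^{*}C = CA^{*}$. Once this ingredient is imported, every remaining step collapses to routine absorption of the idempotents $A^{\dagger}A$ and $C^{\dagger}C$ on both sides of the equation.
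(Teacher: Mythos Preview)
Your proposal is correct and follows essentially the same route as the paper: both arguments work in the particular form $\widehat{A}^{\dagger}=A^{\dagger}-\epsilon A^{\dagger}BA^{\dagger}$, invoke the classical fact that $AC=CA$ and $A^{*}C=CA^{*}$ give $(AC)^{\dagger}=A^{\dagger}C^{\dagger}=C^{\dagger}A^{\dagger}$, and then reduce the dual part by shuttling $A^{\dagger}$ and $C^{\dagger}$ past $A,C,B,D$ and absorbing the idempotents $A^{\dagger}A$, $C^{\dagger}C$. The only difference is cosmetic: you justify the commutations $A^{\dagger}C=CA^{\dagger}$ and $C^{\dagger}A=AC^{\dagger}$ explicitly via the double-commutant/limit description of $A^{\dagger}$, whereas the paper uses them tacitly.
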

\begin{proof} The DMPGI of $\widehat{A}$ and $\widehat{C}$ are 
$\widehat{A}^{\dagger}=A^{\dagger}-\epsilon A^{\dagger}BA^{\dagger}$ and $\widehat{C}^{\dagger}=E^{\dagger}-\epsilon C^{\dagger}DC^{\dagger}$, respectively. So,  $\widehat{A}^{\dagger}\widehat{C}^{\dagger}=(A^{\dagger}-\epsilon A^{\dagger}BA^{\dagger})(C^{\dagger}-\epsilon C^{\dagger}DC^{\dagger})=A^{\dagger}C^{\dagger}-\epsilon (A^{\dagger}C^{\dagger}DC^{\dagger}+A^{\dagger}BA^{\dagger}C^{\dagger}).$ And $\widehat{G}\widehat{Y}=(A+\epsilon B)(C+\epsilon D)=AB+\epsilon (AD+BC).$ By Corollary, $(\widehat{A}\widehat{C})^{\dagger}=(AC)^{\dagger}-\epsilon ((AC)^{\dagger}(AD+BC)(AC)^{\dagger})$. The hypothesis $AC=CA$ and $A^*C=CA^*$ imply that  $(AC)^{\dagger}=A^{\dagger}C^{\dagger}=C^{\dagger}A^{\dagger}$. Further, $(\widehat{A}\widehat{C})^{\dagger}=(AC)^{\dagger}-\epsilon ((AC)^{\dagger}(AD+BC)(AC)^{\dagger})=A^{\dagger}C^{\dagger}-\epsilon (A^{\dagger}C^{\dagger}(AD+BC)A^{\dagger}C^{\dagger})=A^{\dagger}C^{\dagger}-\epsilon (A^{\dagger}C^{\dagger}ADA^{\dagger}C^{\dagger}+A^{\dagger}C^{\dagger}BCA^{\dagger}C^{\dagger})=A^{\dagger}C^{\dagger}-\epsilon (C^{\dagger}A^{\dagger}AA^{\dagger}DC^{\dagger}+A^{\dagger}BC^{\dagger}CC^{\dagger}A^{\dagger})=A^{\dagger}C^{\dagger}-\epsilon (C^{\dagger}A^{\dagger}DC^{\dagger}+A^{\dagger}BC^{\dagger}A^{\dagger})=A^{\dagger}C^{\dagger}-\epsilon (A^{\dagger}C^{\dagger}DC^{\dagger}+A^{\dagger}BC^{\dagger}A^{\dagger})=\widehat{A}^{\dagger}\widehat{C}^{\dagger}$. 
 Similarly, $(\widehat{A}\widehat{C})^{\dagger}=\widehat{C}^{\dagger}\widehat{A}^{\dagger}$. Hence, $(\widehat{A}\widehat{C})^{\dagger}=\widehat{A}^{\dagger}\widehat{C}^{\dagger}=\widehat{C}^{\dagger}\widehat{A}^{\dagger}$.
 \end{proof}
The proof of the reverse and forward-order laws for the DCGI is similar to the above theorem. So, we state it without its proofs.
\begin{theorem}
Let $\widehat{A}=A+\epsilon B$ and $\widehat{C}=C+\epsilon D$ be such that  the DCGI of $\widehat{A}$, $\widehat{C}$, $\widehat{A}\widehat{C}$ exist. If $AC=CA$, $A^*C=CA^*$, $\core{C}B=B\core{C}$ and  $\core{A}D=D\core{A}$, then $\core{(\widehat{A}\widehat{C})}=\core{\widehat{A}}\core{\widehat{C}}=\core{\widehat{C}}\core{\widehat{A}}$.
\end{theorem}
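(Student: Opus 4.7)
The plan is to mirror the proofs of Theorems 4.2 and 4.4 verbatim, replacing the group inverse / Moore--Penrose inverse throughout by the core inverse. The first ingredient is the particular form of the DCGI announced at the beginning of Section \ref{rm4},
\[
\core{\widehat{A}} = \core{A} - \epsilon\,\core{A}B\core{A}, \qquad \core{\widehat{C}} = \core{C} - \epsilon\,\core{C}D\core{C},
\]
from which a direct multiplication (discarding the $\epsilon^{2}$ term) gives
\[
\core{\widehat{A}}\,\core{\widehat{C}} = \core{A}\core{C} - \epsilon\bigl(\core{A}\core{C}D\core{C} + \core{A}B\core{A}\core{C}\bigr).
\]
Writing $\widehat{A}\widehat{C} = AC + \epsilon(AD+BC)$ and applying the same particular-form expression to the product yields
\[
\core{(\widehat{A}\widehat{C})} = \core{(AC)} - \epsilon\,\core{(AC)}(AD+BC)\core{(AC)},
\]
so the problem reduces to identifying the dual parts after exploiting the commutation hypotheses.

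The key algebraic step is the reverse-order law $\core{(AC)} = \core{A}\core{C} = \core{C}\core{A}$. Since $\core{A}$ and $\core{C}$ exist, both $A$ and $C$ have index one. The commutation $AC=CA$ yields at once $A^{\#}C^{\#} = C^{\#}A^{\#}$ and $(AC)^{\#}=A^{\#}C^{\#}$, while $A^{*}C = CA^{*}$ is precisely what forces the idempotent $AC\,\core{A}\,\core{C}$ to be Hermitian. A direct check of the three defining equations $\widehat{A}\widehat{X}\widehat{A}=\widehat{A}$, $\widehat{A}\widehat{X}^{2}=\widehat{X}$, $(\widehat{A}\widehat{X})^{T}=\widehat{A}\widehat{X}$ (or, equivalently, invoking the known reverse-order law for the core inverse of doubly commuting matrices) then identifies $\core{A}\core{C}$ as the core inverse of $AC$. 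I expect this to be the main technical obstacle, and it is exactly analogous to the use of $(AC)^{\dagger} = A^{\dagger}C^{\dagger} = C^{\dagger}A^{\dagger}$ in the proof of Theorem 4.4.

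Once the identity $\core{(AC)} = \core{A}\core{C}$ is in hand, substituting it into the expression for $\core{(\widehat{A}\widehat{C})}$ and shuffling the factors $A, C, B, D$ through $\core{A}, \core{C}$ using $AC=CA$, $\core{A}A\core{A}=\core{A}$, $\core{C}C\core{C}=\core{C}$ together with the hypotheses $\core{C}B=B\core{C}$ and $\core{A}D=D\core{A}$ rewrites the dual part as $\core{A}\core{C}D\core{C} + \core{A}B\core{A}\core{C}$, so that $\core{(\widehat{A}\widehat{C})} = \core{\widehat{A}}\,\core{\widehat{C}}$. The companion identity $\core{\widehat{A}}\,\core{\widehat{C}} = \core{\widehat{C}}\,\core{\widehat{A}}$ follows symmetrically, since $\core{A}\core{C} = \core{C}\core{A}$ and the same four commutation rules allow the analogous simplification. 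With the reverse-order law for the ordinary core inverse taken care of, the remainder is a routine $\epsilon$-bookkeeping identical in structure to the chains of equalities in Theorems 4.2 and 4.4; for this reason the authors state the result without proof.
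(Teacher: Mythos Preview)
Your proposal is correct and follows exactly the approach the paper intends: the paper explicitly omits the proof, stating that it is ``similar to the above theorem'' (the DMPGI case), and your argument is precisely that analogue, replacing $\dagger$ by the core inverse and invoking the reverse-order law $\core{(AC)}=\core{A}\core{C}=\core{C}\core{A}$ under the double commutativity $AC=CA$, $A^{*}C=CA^{*}$.
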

At the of this section, we give a result of the absorption law for the DDGI,
\begin{theorem}
Let $\widehat{A}=A+\epsilon B$ and
$\widehat{C}=C+\epsilon D$ be such that the DDGI  of $\widehat{A}$, $\widehat{C}$ exist. If $A=D,$ $R(A)=R(C)$ and $N(A)=N(C)$, then $\widehat{A}^{D}(\widehat{A}+\widehat{C})\widehat{C}^{D}=\widehat{A}^{D}+\widehat{C}^D$.
\end{theorem}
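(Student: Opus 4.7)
The plan is to distribute and write $\widehat{A}^{D}(\widehat{A}+\widehat{C})\widehat{C}^{D} = \widehat{A}^{D}\widehat{A}\widehat{C}^{D} + \widehat{A}^{D}\widehat{C}\widehat{C}^{D}$, and then match this against $\widehat{A}^{D}+\widehat{C}^{D}$ by comparing real and dual parts separately. To set up the real part I would first argue that $R(A)=R(C)$ and $N(A)=N(C)$, combined with the existence of the two Drazin inverses (hence a common index $k$), force the two spectral projectors to coincide: writing $P := AA^{D}$ one obtains $P = CC^{D} = A^{D}A = C^{D}C$, and therefore $PA=AP=A$, $PC=CP=C$, $PA^{D}=A^{D}P=A^{D}$, $PC^{D}=C^{D}P=C^{D}$, together with $A(I-P)=(I-P)A=0$ and $C(I-P)=(I-P)C=0$. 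With these identities the real part of $\widehat{A}^{D}\widehat{A}\widehat{C}^{D}$ is $A^{D}AC^{D}=PC^{D}=C^{D}$ and the real part of $\widehat{A}^{D}\widehat{C}\widehat{C}^{D}$ is $A^{D}CC^{D}=A^{D}P=A^{D}$, summing to the real part $A^{D}+C^{D}$ of the right-hand side.

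Next I would write $\widehat{A}^{D}=A^{D}+\epsilon R_{A}$ and $\widehat{C}^{D}=C^{D}+\epsilon R_{C}$ using the explicit representation from the main theorem of Section~\ref{rm2}: here $R_{A}$ is built from $A,B,A^{D},P$ via $D_{A}=A^{k-1}B+\cdots+BA^{k-1}$, while $R_{C}$ is built from $C,A,C^{D},P$ via $D_{C}=C^{k-1}A+\cdots+AC^{k-1}$, the hypothesis $A=D$ entering precisely at this point. Expanding the dual part of the LHS and using $PR_{C}=R_{C}-(I-P)R_{C}$ and $R_{A}P=R_{A}-R_{A}(I-P)$ to relocate the projector factors, the claimed identity reduces to
\[
A^{D}BC^{D} + R_{A}AC^{D} + A^{D}CR_{C} + C^{D} = R_{A}(I-P) + (I-P)R_{C}.
\]

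This last identity is the heart of the argument and, I expect, the main obstacle. To verify it I would substitute the closed-form expressions for $R_{A}$ and $R_{C}$, then collapse terms by repeated application of $A(I-P)=(I-P)A=0$, $C(I-P)=(I-P)C=0$, and the Drazin identities $A^{k+1}A^{D}=A^{k}$ and $A(A^{D})^{k+1}=(A^{D})^{k}$ (with the obvious analogues for $C$). The hypothesis $A=D$ is essential here because it forces the dual part of $\widehat{C}$ to lie in the invariant subspace $R(P)$, which is exactly what lets the cross-terms $A^{D}CR_{C}$ and $R_{A}AC^{D}$ telescope against the ``projector-cut'' pieces $R_{A}(I-P)$ and $(I-P)R_{C}$ on the right. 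The verification is mechanical but involves many summands and careful tracking of which factors of $I-P$ annihilate what; that bookkeeping, rather than any single clever step, is the principal difficulty.
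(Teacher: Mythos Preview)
Your overall expansion strategy is fine, but you have missed a hypothesis that the paper takes for granted throughout Section~\ref{rm4}: the section is explicitly restricted to the ``particular form'' $\widehat{A}^{D}=A^{D}-\epsilon A^{D}BA^{D}$ and $\widehat{C}^{D}=C^{D}-\epsilon C^{D}DC^{D}$ (see the first sentence of the section and the proofs of the preceding theorems). You instead plug in the general formula $R_{A}=-A^{D}BA^{D}+(A^{D})^{k+1}D_{A}(I-P)+(I-P)D_{A}(A^{D})^{k+1}$ from Section~\ref{rm2}, and in that generality the theorem is simply false. For instance, with
\[
A=\begin{bmatrix}1&0\\0&0\end{bmatrix},\quad B=\begin{bmatrix}0&1\\1&0\end{bmatrix},\quad C=\begin{bmatrix}2&0\\0&0\end{bmatrix},\quad D=A,
\]
all hypotheses of the statement hold, $\widehat{A}^{\#}$ and $\widehat{C}^{\#}$ exist, and a direct computation gives
\[
\widehat{A}^{D}(\widehat{A}+\widehat{C})\widehat{C}^{D}=\begin{bmatrix}3/2&0\\0&0\end{bmatrix}+\epsilon\begin{bmatrix}-1/4&0\\3/2&0\end{bmatrix}
\neq
\begin{bmatrix}3/2&0\\0&0\end{bmatrix}+\epsilon\begin{bmatrix}-1/4&1\\1&0\end{bmatrix}=\widehat{A}^{D}+\widehat{C}^{D}.
\]
So the ``bookkeeping'' you anticipate in your last paragraph cannot be completed: the identity $A^{D}BC^{D}+R_{A}AC^{D}+A^{D}CR_{C}+C^{D}=R_{A}(I-P)+(I-P)R_{C}$ you isolate does not hold without the extra assumption.

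Under the particular-form hypothesis, however, your outline is exactly the paper's argument, only stated more carefully. With $R_{A}=-A^{D}BA^{D}$ and $R_{C}=-C^{D}DC^{D}=-C^{D}AC^{D}$ one has $R_{A}(I-P)=0=(I-P)R_{C}$, so the right side of your reduced equation vanishes; on the left, $R_{A}AC^{D}=-A^{D}BA^{D}AC^{D}=-A^{D}BC^{D}$ cancels the first term, and $A^{D}CR_{C}=-A^{D}CC^{D}AC^{D}=-A^{D}AC^{D}=-C^{D}$ cancels the stray $C^{D}$, giving $0=0$. The paper carries out precisely this expansion (with the usual crop of typos), relying only on the projector identity $A^{D}A=AA^{D}=CC^{D}=C^{D}C$ that you correctly derive from $R(A)=R(C)$ and $N(A)=N(C)$.
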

\begin{proof} The hypothesis $R(A)=R(C)$ and $N(A)=N(C)$ imply that $A^DAE^D=A^D=E^DAA^D$ and $EE^DA^D=A^DEE^D$. Then,
    \begin{align*}
        \widehat{A}^{D}(\widehat{A}+\widehat{C})\widehat{C}^{D}&=
    (A^D-\epsilon A^DBA^D)(A+\epsilon B+C+\epsilon D)(C^D-\epsilon C^DDC^D)\\
    &=(A^DA+A^DC+\epsilon(A^DB+A^DD-A^DBA^DA-A^DBA^DD)(C^D-C^DDC^D)\\
    &=A^DAC^D+A^DCC^D+\epsilon(A^DBC^D+A^DDC^D-A^DBA^DAC^D\\&-A^DBA^DDC^D-A^DAC^DDC^D-A^DCC^D DC^D)\\
    &=A^D+C^D-\epsilon(A^DBA^DDC^D-A^DAC^DDC^D)\\
    &=A^D-\epsilon A^DBA^DDC+C^D-\epsilon A^DAC^DC^D\\
    &=A^D-\epsilon A^DBA^DDC^D+C^D-\epsilon C^DDC^D\\
     &=A^D-\epsilon A^DBA^D+C^D-\epsilon C^DDC^D\\
     &=\widehat{A}+\widehat{C}.
        \end{align*}
\end{proof}
\section{Partial order of DGGI, DDGI and DCGI}
This section is devoted for partial ordering based on DGGI, DDGI and DCGI.

\begin{definition}
 Let $\widehat{X}=X+\epsilon X_0$ and $ \widehat{Y}=Y+\epsilon{Y_0}$ be such that  the DGGI of $\widehat{X}$ exists. Then, $\widehat{X}$ is said to be below $\widehat{Y}$ under a D-group  order if 
 $$\widehat{X}^{\#}\widehat{X}=\widehat{X}^{\#}\widehat{Y}, \text{ and } X\widehat{X}^{\#}=\widehat{Y}\widehat{X}^{\#}.$$
 It is denoted by $\widehat{X}\leq_{D}^{\#}\widehat{Y}$.
\end{definition}
Next theorem relates the D-group order and group partial order.
\begin{theorem}\label{thma21}
    Let $\widehat{X}=X+\epsilon X_0$ and $ \widehat{Y}=Y+\epsilon{Y_0}$ be such that  the DGGI of $\widehat{X}$ exists. Then, $X\leq_{D}^{\#}Y$ if and only if 
    \begin{align*}
        X&\leq^{\#}Y,\\
          X^{\#}X_0+RX&= X^{\#}Y_0+RY,\\
          XR+X_0X^{\#}&= YR+Y_0X^{\#},
    \end{align*}
    where $R=-X^{\#}X_0X^{\#}+(X^{\#})^2X_0(I-XX^{\#})+(I-XX^{\#})X_0(X^{\#})^2.$
\end{theorem}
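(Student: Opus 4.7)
My plan is to prove the equivalence by direct expansion, using the explicit form of the DGGI supplied by Theorem \ref{thm2}. Since $\widehat{X}^{\#}$ is assumed to exist, Theorem \ref{thm2} gives $\widehat{X}^{\#}=X^{\#}+\epsilon R$ with exactly the $R$ named in the statement, so I may substitute this formula directly into the two defining equalities of $\widehat{X}\leq_{D}^{\#}\widehat{Y}$ and separate real from dual parts.

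For the first defining equality $\widehat{X}^{\#}\widehat{X}=\widehat{X}^{\#}\widehat{Y}$, I would multiply out with $\epsilon^{2}=0$ to get
\begin{align*}
X^{\#}X+\epsilon(X^{\#}X_{0}+RX) \;=\; X^{\#}Y+\epsilon(X^{\#}Y_{0}+RY).
\end{align*}
Matching real parts yields $X^{\#}X=X^{\#}Y$ and matching dual parts yields the second listed condition. The same computation applied to $\widehat{X}\widehat{X}^{\#}=\widehat{Y}\widehat{X}^{\#}$ produces
\begin{align*}
XX^{\#}+\epsilon(XR+X_{0}X^{\#}) \;=\; YX^{\#}+\epsilon(YR+Y_{0}X^{\#}),
\end{align*}
whose real part is $XX^{\#}=YX^{\#}$ and whose dual part is the third listed condition.

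To finish, I would appeal to the standard definition of the group partial order, namely $X\leq^{\#}Y$ iff $X^{\#}X=X^{\#}Y$ and $XX^{\#}=YX^{\#}$, to recognize that the two real-part equalities extracted above are precisely $X\leq^{\#}Y$. The reverse implication is the same expansion run backwards: given the three listed conditions, the real and dual components reassemble (again using $\epsilon^{2}=0$) into the two identities defining $\leq_{D}^{\#}$.

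The calculation itself is routine, so I do not anticipate any genuine obstacle; the substance of the argument lies only in correctly identifying real and dual components. The one point that merits care is citing Theorem \ref{thm2} as the justification for using the closed form $\widehat{X}^{\#}=X^{\#}+\epsilon R$, since without the standing hypothesis that $\widehat{X}^{\#}$ exists, the expression $R$ in the statement would not even be guaranteed to coincide with the dual part of $\widehat{X}^{\#}$, and the equivalence would lose its meaning.
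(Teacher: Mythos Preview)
Your proposal is correct and follows essentially the same approach as the paper: expand the two defining identities of $\leq_{D}^{\#}$ using $\widehat{X}^{\#}=X^{\#}+\epsilon R$ from Theorem~\ref{thm2}, then separate real and dual parts to obtain the three listed conditions (with the real parts assembling into $X\leq^{\#}Y$). Your write-up is in fact slightly more complete than the paper's, which only spells out the forward implication explicitly; your remark that the converse is the same expansion read backwards is exactly right.
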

\begin{proof}Suppose that $X\leq_{D}^{\#}Y$, i.e., $\widehat{X}^{\#}\widehat{X}=\widehat{X}^{\#}\widehat{Y}, \text{ and } X\widehat{X}^{\#}=\widehat{Y}\widehat{X}^{\#}$. Now,
\begin{align}\label{par1}
    \widehat{X}^{\#}\widehat{X}&=(X^{\#}+\epsilon R)(X+\epsilon X_0)\nonumber\\
    &=X^{\#}X+\epsilon(X^{\#}X_0+RX)
\end{align}
and 
\begin{align}\label{par2}
      \widehat{X}^{\#}\widehat{Y}&=(X^{\#}+\epsilon R)(Y+\epsilon Y_0)\nonumber\\
    &=X^{\#}Y+\epsilon(X^{\#}Y_0+RY).
\end{align}
From \eqref{par1} and \eqref{par2}, we get 
\begin{align*}
    X^{\#}X&=X^{\#}Y\\
   X^{\#}X_0+RX&= X^{\#}Y_0+RY.
\end{align*}
Similarly, the dual matrix equality $\widehat{Y}\widehat{X}^{\#}=\widehat{X}\widehat{X}^{\#}$ implies that
\begin{align*}
    XX^{\#}&=YX^{\#}\\
    XR+X_0X^{\#}&=YR+Y_0X^{\#}.
\end{align*}
So, we get  
  \begin{align*}
        X^{\#}X=X^{\#}Y~ &~\&XX^{\#}=YX^{\#}\\
          X^{\#}X_0+RX&= X^{\#}Y_0+RY,\\
          XR+X_0X^{\#}&= YR+Y_0X^{\#},
    \end{align*}
    i.e., 
      \begin{align*}
        X&\leq^{\#}Y,\\
          X^{\#}X_0+RX&= X^{\#}Y_0+RY,\\
          XR+X_0X^{\#}&= YR+Y_0X^{\#}.
    \end{align*}
\end{proof}
Groß \cite{gro} established a representation of matrices with the group partial order.
\begin{theorem}(\cite{gro})\label{thm221}
    Let $A,B\in\mathbb{R}^{n\times n}$ with rank$A=r$ and $Ind(A)=1$. Then, the following statements are equivalents
    \begin{enumerate}
        \item[(i)] $A\leq^{\#}B$;
        \item[(ii)] $A$ and $B$ can be written as 
        $$A=P\begin{bmatrix}
            C&0\\0&0
        \end{bmatrix}P^{-1} \text{ and }B=P\begin{bmatrix}
            C&0\\0&B_1
        \end{bmatrix}P^{-1},$$
    \end{enumerate}
    where $C\in\mathbb{R}^{r\times r}$ and $P\in\mathbb{R}^{n\times n}$ are nonsingular matrix, and $B_1\in\mathbb{R}^{n-r\times n-r}$ is arbitrary.
\end{theorem}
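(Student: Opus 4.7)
My aim is to prove the equivalence by using the core-nilpotent decomposition of Theorem \ref{kp3} to pin $A$ down in a distinguished basis and then read off the resulting constraints on $B$.

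First I would record that since $Ind(A)=1$, the nilpotent block $N$ appearing in Theorem \ref{kp3} has itself index one, which forces $N=0$. Hence one can write
$$A=P\begin{bmatrix} C & 0 \\ 0 & 0 \end{bmatrix}P^{-1}, \qquad A^{\#}=A^{D}=P\begin{bmatrix} C^{-1} & 0 \\ 0 & 0 \end{bmatrix}P^{-1},$$
with $C \in \mathbb{R}^{r\times r}$ and $P\in\mathbb{R}^{n\times n}$ nonsingular. In particular,
$$A^{\#}A \;=\; AA^{\#} \;=\; P\begin{bmatrix} I_r & 0 \\ 0 & 0 \end{bmatrix}P^{-1}.$$

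For the direction (ii)$\Rightarrow$(i), I would substitute the assumed block forms of $A$ and $B$ directly into the two defining equalities of the group partial order. A single block multiplication yields $A^{\#}B=P\begin{bmatrix} I_r & 0 \\ 0 & 0 \end{bmatrix}P^{-1}=A^{\#}A$, and symmetrically $BA^{\#}=AA^{\#}$, so $A\leq^{\#}B$.

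For (i)$\Rightarrow$(ii), I would expand $B$ conformally in the same basis as $B=P\begin{bmatrix} B_{11} & B_{12} \\ B_{21} & B_{22} \end{bmatrix}P^{-1}$. Computing $A^{\#}B$ and equating it to $A^{\#}A$ from above yields $C^{-1}B_{11}=I_r$ and $C^{-1}B_{12}=0$, which forces $B_{11}=C$ and $B_{12}=0$. The mirror relation $BA^{\#}=AA^{\#}$ gives $B_{11}C^{-1}=I_r$ (consistent with the previous step) and $B_{21}C^{-1}=0$, hence $B_{21}=0$. The block $B_{22}$ is left unconstrained, and renaming $B_{22}=B_1$ produces the claimed form.

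The only real obstacle is the preliminary step: justifying that $Ind(A)=1$ does force the nilpotent part in Theorem \ref{kp3} to vanish, so that the \emph{same} matrix $P$ simultaneously exhibits both $A$ and $A^{\#}$ in the clean block form above. Once this is in place, the rest of the argument is routine block bookkeeping and introduces no further subtlety.
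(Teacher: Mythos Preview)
Your argument is correct. The core--nilpotent decomposition with $Ind(A)=1$ indeed forces $N=0$ (a nilpotent matrix of index $1$ is the zero matrix), and from there the block computations you outline go through verbatim in both directions.

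As for comparison: the paper does not supply its own proof of this statement at all. Theorem~\ref{thm221} is quoted from Gro\ss~\cite{gro} as an external result and used as a black box in the proof of Theorem~\ref{th22}. So your proposal is not an alternative to the paper's proof but rather a self-contained justification of a result the paper merely cites. Your approach has the virtue of relying only on Theorem~\ref{kp3}, which is already available in the preliminaries, so it keeps the exposition internally closed.
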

We now provide representations the dual matrices with the help of D-group order.
\begin{theorem}\label{th22}
    Let $\widehat{X}=X+\epsilon X_0$ and $ \widehat{Y}=Y+\epsilon{Y_0}$ be such that  the DGGI of $\widehat{X}$ exists. If $X\leq_{D}^{\#}Y$, then 
   \begin{align*}
       \widehat{X}&=P\begin{bmatrix}
           C&0\\0&0
       \end{bmatrix}P^{-1}+\epsilon P\begin{bmatrix}
           X_1&X_2\\X_3&0
       \end{bmatrix}P^{-1},\\
       \widehat{Y}&=P\begin{bmatrix}
            C&0\\0&B
        \end{bmatrix}P^{-1}+\epsilon P\begin{bmatrix}
            X_1&X_2-CR_2B\\X_3-BR_3C&Y_4
        \end{bmatrix}P^{-1},
   \end{align*}
   where $Y_4$ is arbitrary matrix, and $\widehat{X}^{\#}=X^{\#}+\epsilon R$, $X^{\#}=P\begin{bmatrix}
           C^{-1}&0\\0&0
       \end{bmatrix}P^{-1},$ $R=P\begin{bmatrix}
       R_1&R_2\\R_3&R_4
   \end{bmatrix}P^{-1}$. 
\end{theorem}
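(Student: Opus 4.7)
The plan is to combine three structural results already available in the paper: the characterization of the D-group order in Theorem \ref{thma21}, the block form of matrices that are comparable under the classical group partial order (Theorem \ref{thm221}), and the block form of dual matrices admitting a DGGI coming from Theorem \ref{thm2}. From these the forms of $\widehat{X}$ and $\widehat{Y}$ will be read off, and the explicit entries of $Y_0$ will be forced by the remaining scalar equations.

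First, I would invoke Theorem \ref{thma21} to unpack $X\leq_{D}^{\#}Y$ into the three conditions
\begin{align*}
X &\leq^{\#} Y,\\
X^{\#}X_0+RX &= X^{\#}Y_0+RY,\\
XR+X_0X^{\#} &= YR+Y_0X^{\#}.
\end{align*}
The first one, via Theorem \ref{thm221}, gives nonsingular $P$ and $C$ with $X=P\,\mathrm{diag}(C,0)\,P^{-1}$ and $Y=P\,\mathrm{diag}(C,B)\,P^{-1}$ for some (arbitrary) $B$. Then, since $\widehat{X}^{\#}$ exists, Theorem \ref{thm2} forces $X_0$ to have the form $X_0=P\bigl[\begin{smallmatrix}X_1&X_2\\X_3&0\end{smallmatrix}\bigr]P^{-1}$ in the same $P$; the assumed block form of $R$ is consistent with the same theorem.

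Next I would write $Y_0=P\bigl[\begin{smallmatrix}Y_1&Y_2\\Y_3&Y_4\end{smallmatrix}\bigr]P^{-1}$ with $Y_1,Y_2,Y_3,Y_4$ unknown, and expand the two matrix equalities above block by block. A direct computation yields
$$X^{\#}X_0+RX=P\begin{bmatrix}C^{-1}X_1+R_1C & C^{-1}X_2\\ R_3C & 0\end{bmatrix}P^{-1},\qquad X^{\#}Y_0+RY=P\begin{bmatrix}C^{-1}Y_1+R_1C & C^{-1}Y_2+R_2B\\ R_3C & R_4B\end{bmatrix}P^{-1},$$
so the first equality gives $Y_1=X_1$, $Y_2=X_2-CR_2B$ and $R_4B=0$. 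Repeating this for $XR+X_0X^{\#}=YR+Y_0X^{\#}$ yields symmetrically $Y_3=X_3-BR_3C$ and $BR_4=0$, while $Y_4$ is left completely unconstrained. Substituting these back produces the claimed block forms of $\widehat{X}$ and $\widehat{Y}$.

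The only real difficulty is bookkeeping: all four block-by-block multiplications have to be carried out carefully using $X^{\#}=P\,\mathrm{diag}(C^{-1},0)\,P^{-1}$ and the given block form of $R$, and one must verify that the two matrix equalities together pin down $Y_1,Y_2,Y_3$ but leave $Y_4$ free (matching the theorem's statement that $Y_4$ is arbitrary). Nothing beyond routine linear algebra is required; the conceptual content is entirely in chaining Theorems \ref{thm2}, \ref{thm221}, and \ref{thma21} together.
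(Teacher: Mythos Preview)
Your proposal is correct and follows essentially the same route as the paper: invoke Theorem \ref{thma21} to reduce the D-group order to $X\leq^{\#}Y$ plus two matrix identities, use Theorems \ref{thm221} and \ref{thm2} to get the block forms of $X$, $Y$ and $X_0$, then compare blocks in the two remaining identities to pin down $Y_1,Y_2,Y_3$ while $Y_4$ stays free. The only minor difference is that the paper expands the full dual products $\widehat{X}^{\#}\widehat{X}$, $\widehat{X}^{\#}\widehat{Y}$, $\widehat{X}\widehat{X}^{\#}$, $\widehat{Y}\widehat{X}^{\#}$ directly, whereas you work with the $\epsilon$-coefficient equations from Theorem \ref{thma21}; this is the same computation in slightly different packaging.
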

\begin{proof}
   Given $\widehat{X}^{\#}$ exists. So, by Theorem \ref{thm2}, we have
    \begin{align}\label{par3}
        \widehat{X}=P\begin{bmatrix}
           C&0\\0&0
       \end{bmatrix}P^{-1}+\epsilon P\begin{bmatrix}
           X_1&X_2\\X_3&0
       \end{bmatrix}P^{-1}.
    \end{align}
    From Theorem \ref{thma21} and Theorem \ref{thm221}, we get 
    \begin{align}\label{par4}
        \widehat{Y}=P\begin{bmatrix}
            C&0\\0&B
        \end{bmatrix}P^{-1}+\epsilon P\begin{bmatrix}
            Y_1&Y_2\\Y_3&Y_4
        \end{bmatrix}P^{-1}. 
    \end{align}
    Again, from Theorem \ref{thma21} and \eqref{par3}, \eqref{par4} we get 
    \begin{align*}
        \widehat{X}^{\#}\widehat{X}&=P^{-1}\begin{bmatrix}
            I_r&0\\0&0
        \end{bmatrix}P^{-1}+\epsilon P\begin{bmatrix}
            R_1C^{-1}+C^{-1}X_1&C^{-1}X_2\\R_3C^{-1}&0
        \end{bmatrix}P^{-1},\\
        \widehat{X}^{\#}\widehat{Y}&=P\begin{bmatrix}
            I_r&0\\0&0
        \end{bmatrix}P^{-1}+\epsilon P\begin{bmatrix}
            C^{-1}Y_1+R_1C&C^{-1}Y_2+R_2B\\R_3C&R_4B
        \end{bmatrix}P^{-1}\\
                \widehat{X}\widehat{X}^{\#}&=P^{-1}\begin{bmatrix}
            I_r&0\\0&0
        \end{bmatrix}P^{-1}+\epsilon P\begin{bmatrix}
            CR_1+X_1C^{-1}&CR_2\\X_3C^{-1}&0
        \end{bmatrix}P^{-1},\\
        \widehat{Y}\widehat{X}^{\#}&=P\begin{bmatrix}
            I_r&0\\0&0
        \end{bmatrix}P^{-1}+\epsilon P\begin{bmatrix}
           Y_1 C^{-1}+CR_1&CR_2\\Y_3C^{-1}+BR_3&BR_4
        \end{bmatrix}P^{-1}.      
    \end{align*}
    From the above four equations, we obtain $Y_1=X_1$, $Y_2=X_2-CR_2B$, and $Y_3=X_3-BR_3C$.
\end{proof}
\begin{theorem}
    The D-group order is a partial order.
\end{theorem}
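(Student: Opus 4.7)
The plan is to verify reflexivity, antisymmetry, and transitivity of $\leq_D^{\#}$ in turn.

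Reflexivity is immediate: both defining identities $\widehat{X}^{\#}\widehat{X}=\widehat{X}^{\#}\widehat{X}$ and $\widehat{X}\widehat{X}^{\#}=\widehat{X}\widehat{X}^{\#}$ hold trivially.

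For antisymmetry, suppose $\widehat{X}\leq_D^{\#}\widehat{Y}$ and $\widehat{Y}\leq_D^{\#}\widehat{X}$. Theorem \ref{thma21} yields $X\leq^{\#}Y$ and $Y\leq^{\#}X$, and antisymmetry of the classical group partial order (via the block form of Theorem \ref{thm221}) forces $X=Y$. Substituting $X=Y$ into the two dual-part equations of Theorem \ref{thma21} leaves $X^{\#}(X_0-Y_0)=0$ and $(X_0-Y_0)X^{\#}=0$. Writing $X_0-Y_0$ in the block basis supplied by Theorem \ref{kp3}, these two conditions force every block of $X_0-Y_0$ except possibly the lower-right one to vanish. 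The hypothesis that both $\widehat{X}^{\#}$ and $\widehat{Y}^{\#}$ exist forces, via Theorem \ref{thm2}(ii), the lower-right blocks of both $X_0$ and $Y_0$ to be zero, so the same holds for their difference. Hence $X_0=Y_0$, and therefore $\widehat{X}=\widehat{Y}$.

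For transitivity, assume $\widehat{X}\leq_D^{\#}\widehat{Y}$ and $\widehat{Y}\leq_D^{\#}\widehat{Z}$. The core step is the auxiliary identity
\begin{equation*}
\widehat{X}^{\#}\widehat{X}\widehat{Y}^{\#}=\widehat{X}^{\#}\quad\text{and symmetrically}\quad\widehat{Y}^{\#}\widehat{X}\widehat{X}^{\#}=\widehat{X}^{\#},
\end{equation*}
which I claim holds whenever $\widehat{X}\leq_D^{\#}\widehat{Y}$ and both DGGIs exist. Granting this, I chain
\begin{equation*}
\widehat{X}^{\#}\widehat{Z}=\widehat{X}^{\#}\widehat{X}\widehat{Y}^{\#}\widehat{Z}=\widehat{X}^{\#}\widehat{X}\widehat{Y}^{\#}\widehat{Y}=\widehat{X}^{\#}\widehat{Y}=\widehat{X}^{\#}\widehat{X},
\end{equation*}
using $\widehat{Y}^{\#}\widehat{Z}=\widehat{Y}^{\#}\widehat{Y}$ in the middle step and $\widehat{X}^{\#}\widehat{Y}=\widehat{X}^{\#}\widehat{X}$ at the end. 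A mirror computation from the second identity yields $\widehat{Z}\widehat{X}^{\#}=\widehat{X}\widehat{X}^{\#}$, proving $\widehat{X}\leq_D^{\#}\widehat{Z}$.

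The main obstacle is establishing the auxiliary identity. On the real level, $X^{\#}XY^{\#}=X^{\#}$ is immediate from the simultaneous canonical form of Theorem \ref{thm221}. For the dual part I would work in the common basis $P$ furnished by Theorem \ref{th22}, substitute the explicit formulas for the dual parts of $\widehat{X}^{\#}$ and $\widehat{Y}^{\#}$ provided by Theorem \ref{thm2}, invoke the block relations between $X_0$ and $Y_0$ imposed by Theorem \ref{th22}, and simplify using the group-inverse identity $BB^{\#}B=B$ on the $(2,2)$-block. The dual part of $\widehat{X}^{\#}\widehat{X}\widehat{Y}^{\#}$ then telescopes block by block to exactly the dual part of $\widehat{X}^{\#}$, confirming the identity and completing the proof.
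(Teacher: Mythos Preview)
The paper states this theorem without proof, so there is no argument in the text to compare your proposal against. Your outline is therefore being assessed on its own merits.

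Your treatment of reflexivity and antisymmetry is correct. For antisymmetry, the key observation that the existence of both $\widehat{X}^{\#}$ and $\widehat{Y}^{\#}$ (which is implicit once the relation is taken as a partial order on the set of DGGI-invertible dual matrices) kills the $(2,2)$-block of $X_0-Y_0$ via Theorem~\ref{thm2}(ii) is exactly the right idea.

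For transitivity, your strategy through the auxiliary identity $\widehat{X}^{\#}\widehat{X}\widehat{Y}^{\#}=\widehat{X}^{\#}$ is sound and the chain of equalities that follows from it is clean. The one place where your write-up is a sketch rather than a proof is the verification of the dual part of this identity. It does in fact go through: working in the basis $P$ of Theorem~\ref{th22}, with $R_2=C^{-2}X_2$ and $R_3=X_3C^{-2}$ as forced by Theorem~\ref{thm2}, the only nontrivial block to check is the $(1,2)$-block, and there the terms involving $X_2'=X_2-C^{-1}X_2B$ collapse after one use of $B\,BB^{\#}=B$ to give exactly $C^{-2}X_2$. So the telescoping you anticipate is real, but in a finished proof you should display at least that block computation rather than assert it. The symmetric identity $\widehat{Y}^{\#}\widehat{X}\widehat{X}^{\#}=\widehat{X}^{\#}$ follows by the analogous check on the $(2,1)$-block.
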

   
Here, we introduce another ordering in dual matrices, it called D-core order.
\begin{definition}
 Let $\widehat{X}=X+\epsilon X_0$ and $ \widehat{Y}=Y+\epsilon{Y_0}$ be such that  the DCGI of $\widehat{X}$ exists. Then, $\widehat{X}$ is said to be below $\widehat{Y}$ under a D-core  order if 
 $$\core{\widehat{X}}\widehat{X}=\core{\widehat{X}}\widehat{Y}, \text{ and } X\core{\widehat{X}}=\widehat{Y}\core{\widehat{X}}.$$
 It is denoted by $\widehat{X}\core{\leq_{D}}\widehat{Y}$.
\end{definition}
This can be prove analogous of  Theorem \ref{thma21}.
\begin{theorem}\label{thm21}
    Let $\widehat{X}=X+\epsilon X_0$ and $ \widehat{Y}=Y+\epsilon{Y_0}$ be such that  the DGGI of $\widehat{X}$ exists. Then, $X\core{\leq_{D}}Y$ if and only if 
    \begin{align*}
        X&\core{\leq} Y,\\
          \core{X}X_0+RX&= \core{X}Y_0+RY,\\
          XR+X_0\core{X}&= YR+Y_0\core{X},
    \end{align*}
    where $R=-\core{X}X_0X^{\dagger}+X^{\#}X_0X^{\dagger}-X^{\#}X_0\core{X}+\core{X}(X_0X^{\dagger})^T(I-XX^{\#})+(I-XX^{\#})X_0X^{\#}\core{X}.$
\end{theorem}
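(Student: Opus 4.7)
The plan is to mirror the proof of Theorem \ref{thma21} almost verbatim, using the D-core order equalities instead of the D-group order equalities. The key ingredient is the representation $\core{\widehat{X}} = \core{X} + \epsilon R$, with $R$ being the expression given in the statement; this representation (analogous to the dual part formula for $\widehat{A}^{\#}$ in Theorem \ref{thm2} and for $\widehat{A}^{D}$ in the main theorem of Section \ref{rm2}) is the DCGI formula of Wang and Gao that we already rely on in Section \ref{rm4}. Once this representation is in hand, the remainder is a straightforward separation of real and dual parts.

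For the forward direction, I would assume $\widehat{X} \core{\leq_D} \widehat{Y}$, i.e.\ $\core{\widehat{X}}\widehat{X} = \core{\widehat{X}}\widehat{Y}$ and $\widehat{X}\core{\widehat{X}} = \widehat{Y}\core{\widehat{X}}$. Expanding the first equation using $\widehat{X}=X+\epsilon X_0$, $\widehat{Y}=Y+\epsilon Y_0$, and $\core{\widehat{X}}=\core{X}+\epsilon R$, and using $\epsilon^2=0$, I obtain
\begin{align*}
\core{\widehat{X}}\widehat{X} &= \core{X}X + \epsilon(\core{X}X_0 + RX),\\
\core{\widehat{X}}\widehat{Y} &= \core{X}Y + \epsilon(\core{X}Y_0 + RY).
\end{align*}
Matching real and dual parts yields $\core{X}X=\core{X}Y$ and $\core{X}X_0+RX=\core{X}Y_0+RY$. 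Performing the same expansion on $\widehat{X}\core{\widehat{X}}=\widehat{Y}\core{\widehat{X}}$ gives $X\core{X}=Y\core{X}$ and $XR+X_0\core{X}=YR+Y_0\core{X}$. Together, the two real-part equalities $\core{X}X=\core{X}Y$ and $X\core{X}=Y\core{X}$ are precisely the definition of the (complex/real) core partial order $X \core{\leq} Y$, while the two dual-part equalities are the remaining two conditions in the statement.

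For the converse, I would simply reverse the computation: given $X\core{\leq}Y$ together with the two dual-part equalities, I reassemble $\core{\widehat{X}}\widehat{X}$ and $\core{\widehat{X}}\widehat{Y}$ (respectively $\widehat{X}\core{\widehat{X}}$ and $\widehat{Y}\core{\widehat{X}}$) and observe that they agree in both real and dual parts, which gives the two defining identities of $\widehat{X} \core{\leq_D} \widehat{Y}$.

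The only non-routine point, and hence the main obstacle, is writing down and justifying the explicit dual part $R$ of $\core{\widehat{X}}$. I would handle this by quoting the DCGI representation from Wang and Gao and verifying (or citing) that it matches the formula in the statement; once this is available, the rest is bookkeeping in the dual-number algebra exactly as in the proof of Theorem \ref{thma21}, and no new difficulty arises. As a sanity check I would also note that the structural parallelism between the group-inverse case (Theorem \ref{thma21}) and the core-inverse case justifies the phrase "this can be proved analogous of Theorem \ref{thma21}" used just before the statement.
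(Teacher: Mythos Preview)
Your proposal is correct and follows exactly the approach the paper intends: the paper states that this result ``can be prove[d] analogous of Theorem \ref{thma21}'' and gives no separate proof, so your expansion of $\core{\widehat{X}}\widehat{X}$, $\core{\widehat{X}}\widehat{Y}$, $\widehat{X}\core{\widehat{X}}$, $\widehat{Y}\core{\widehat{X}}$ via $\core{\widehat{X}}=\core{X}+\epsilon R$ and subsequent matching of real and dual parts is precisely the intended argument. Your identification of the DCGI representation from Wang and Gao as the source of the formula for $R$ is also the right justification for the only non-routine step.
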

\begin{theorem}
    The D-core order is a partial order.
\end{theorem}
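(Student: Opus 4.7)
The plan is to verify the three defining axioms of a partial order—reflexivity, antisymmetry, and transitivity—by reducing each, via the characterization in Theorem \ref{thm21}, to the corresponding property of the classical core partial order $\core{\leq}$ on real matrices (which is already known to be a partial order) together with some bookkeeping on the dual parts. Reflexivity is immediate from the definition since $\core{\widehat{X}}\widehat{X}=\core{\widehat{X}}\widehat{X}$ and $\widehat{X}\core{\widehat{X}}=\widehat{X}\core{\widehat{X}}$ trivially, so no work is required there.

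For antisymmetry, I would assume $\widehat{X}\core{\leq_{D}}\widehat{Y}$ and $\widehat{Y}\core{\leq_{D}}\widehat{X}$, both of which require $\core{\widehat{X}}$ and $\core{\widehat{Y}}$ to exist. Theorem \ref{thm21} then gives $X\core{\leq}Y$ and $Y\core{\leq}X$, so antisymmetry of the classical core order forces $X=Y$. Substituting $X=Y$ into the two dual-part equations of Theorem \ref{thm21} applied in both directions yields $\core{X}(X_{0}-Y_{0})=0$ and $(X_{0}-Y_{0})\core{X}=0$. To conclude $X_{0}=Y_{0}$ I would invoke the block-structure constraint on the dual part imposed by the existence of the DCGI (the analog of Theorem \ref{th22} for the core inverse): this forces $X_{0}$ and $Y_{0}$ to live in a subspace on which $\core{X}(\cdot)=0$ together with $(\cdot)\core{X}=0$ has only the zero solution, yielding $X_{0}=Y_{0}$ and hence $\widehat{X}=\widehat{Y}$.

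For transitivity, I would take $\widehat{X}\core{\leq_{D}}\widehat{Y}$ and $\widehat{Y}\core{\leq_{D}}\widehat{Z}$. By Theorem \ref{thm21} the real parts obey $X\core{\leq}Y$ and $Y\core{\leq}Z$; transitivity of the classical core order gives $X\core{\leq}Z$. For the two dual-part conditions in Theorem \ref{thm21}, I would chain the equalities: the first condition for $\widehat{X}\core{\leq_{D}}\widehat{Y}$ gives $\core{X}X_{0}+R_{X}X=\core{X}Y_{0}+R_{X}Y$, and the first condition for $\widehat{Y}\core{\leq_{D}}\widehat{Z}$ combined with the real-part relations $\core{X}Y=\core{X}X$ (from $X\core{\leq}Y$) and the existing identities for $R_{X}$ allows us to bridge to $\core{X}Z_{0}+R_{X}Z$. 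The second condition is handled symmetrically.

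The main obstacle is the antisymmetry step: deducing $X_{0}=Y_{0}$ from just $\core{X}(X_{0}-Y_{0})=0$ and $(X_{0}-Y_{0})\core{X}=0$ is not possible without the extra structural hypothesis coming from DCGI existence, so the crux is to write out the block decomposition induced by $\core{\widehat{X}}$ and $\core{\widehat{Y}}$ existing and verify that the only admissible dual-part perturbations satisfying both null conditions are zero. In the transitivity step, the secondary technical difficulty is keeping track of the three different ``$R$'' terms (one for each of $\widehat{X},\widehat{Y},\widehat{Z}$) and showing that the chained equalities collapse to the single pair needed for $\widehat{X}\core{\leq_{D}}\widehat{Z}$; this is routine algebra once the correct substitutions are identified.
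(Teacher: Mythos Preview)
The paper states this theorem without proof (just as it does for the D-group order), so there is no argument in the manuscript to compare your proposal against. What follows is an assessment of your outline on its own merits.

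Reflexivity is indeed immediate, and your antisymmetry sketch is essentially correct once the missing detail is filled in. After reducing to $X=Y$ and obtaining $\core{X}\Delta=0$, $\Delta\core{X}=0$ for $\Delta=X_{0}-Y_{0}$, note that $N(\core{X})=R(X)^{\perp}$ and $R(\core{X})=R(X)$, so these two conditions are equivalent to $XX^{\dagger}\Delta=0$ and $\Delta XX^{\#}=0$. The DCGI existence for both $\widehat{X}$ and $\widehat{Y}$ (which, by \cite{rg5}, is equivalent to DGGI existence) gives $(I-XX^{\#})\Delta(I-XX^{\#})=0$; expanding and using $\Delta XX^{\#}=0$ yields $\Delta=XX^{\#}\Delta$, and then $0=XX^{\dagger}\Delta=XX^{\dagger}XX^{\#}\Delta=XX^{\#}\Delta=\Delta$. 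So that part goes through.

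The genuine gap is in transitivity. The relation $\widehat{Y}\core{\leq_{D}}\widehat{Z}$ gives you, on the dual side, $\core{Y}(Z_{0}-Y_{0})+R_{Y}(Z-Y)=0$ and $(Z_{0}-Y_{0})\core{Y}+(Z-Y)R_{Y}=0$, but what you must verify involves $\core{X}$ and $R_{X}$, not $\core{Y}$ and $R_{Y}$. Your claim that ``the existing identities for $R_{X}$'' together with $\core{X}Y=\core{X}X$ let you ``bridge'' to $\core{X}Z_{0}+R_{X}Z$ is not justified: there is no obvious substitution that converts the $(\core{Y},R_{Y})$-equations into the $(\core{X},R_{X})$-equations. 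What is actually needed is an auxiliary identity of the type $\core{\widehat{X}}=\core{\widehat{X}}\,\widehat{Y}\,\core{\widehat{Y}}$ (or, equivalently, its real and dual components) valid whenever $\widehat{X}\core{\leq_{D}}\widehat{Y}$; with that in hand one gets $\core{\widehat{X}}(\widehat{Z}-\widehat{Y})=\core{\widehat{X}}\,\widehat{Y}\,\core{\widehat{Y}}(\widehat{Z}-\widehat{Y})=0$ directly. Alternatively, one can establish a D-core analogue of Theorem~\ref{th22} (a simultaneous block form for $\widehat{X}$ and $\widehat{Y}$ under $\widehat{X}\core{\leq_{D}}\widehat{Y}$) and read off transitivity from the block structure. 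Either route requires real work that your proposal does not supply; calling it ``routine algebra once the correct substitutions are identified'' hides precisely the step that carries the argument.
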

\section{Conclusion}
The essential determinations are summarized as follows:
\begin{itemize}
\item The necessary and sufficient conditions for the existence of the DDGI are established.
\item Application of the DDGI for solving a dual linear system is presented.
 \item The reverse and forward-order laws for a particular form of  the MPDGI, DGGI, DCGI, and DDGI  have been established.
 \item Finally, a few necessary and sufficient conditions for the existence of partial-order of DCGI and DGGI have been discussed.
\end{itemize}

\section*{Data Availability Statements}
Data sharing not applicable to this article as no data sets were generated or analysed during the current study.

\section*{Conflicts of interest} 

The authors declare that they have no conflict of interest.
\section*{Acknowledgements}

The first author acknowledges the support of the CSIR-UGC, India.

 
\end{document}